\numberwithin{equation}{section}
\theoremstyle{plain}
\newtheorem{prop}{Proposition}
\newtheorem{theo}[prop]{Theorem}
\newtheorem{coro}[prop]{Corollary}
\newtheorem{lemm}[prop]{Lemma}
\newtheorem{assu}[prop]{Assumption}
\theoremstyle{definition}
\newtheorem{defi}[prop]{Definition}
\newtheorem{rema}[prop]{Remark}
\newcommand{\ovl}{\overline}
\def\G{{\mathcal G}}
\def\Hom{{\rm Hom}}
\def\Gal{{\rm  G}}
\def\G{{\mathcal G}}   
\newcommand{\Ker}{{\rm Ker}}
\def\ra{\rightarrow}
\def\P{{\mathbb P}}
\def\Q{{\mathbb Q}}
\def\Z{{\mathbb Z}}
\def\N{{\mathbb N}}
\def\Div{{\rm Div}}
\def\ra{\rightarrow}
\def\ovl{\overline}
\def\Gal{{\rm Gal}}
\def\ra{\rightarrow}
\def\P{{\mathbb P}}
\def\Q{{\mathbb Q}}
\def\Z{{\mathbb Z}}
\def\N{{\mathbb N}}
\def\rK{{\rm K}}
\def\Div{{\rm Div}}
\author{Fedor Bogomolov}
\address{Courant Institute\\
                New York University \\
                New York, NY 10012 \\
                USA }
\email{bogomolo@cims.nyu.edu}
\author{Yuri Tschinkel}
\address{Courant Institute\\
                New York University \\
                New York, NY 10012 \\
                USA }
\email{tschinkel@cims.nyu.edu}
\title[Milnor $\rK_2$]{Milnor $\rK_2$ and field homomorphisms}
\begin{document}
\date{\today}

\begin{abstract}
We prove that the function field of an 
algebraic variety of dimension $\ge 2$ over 
an algebraically closed field 
is completely determined by its 
first and second Milnor $\rK$-groups.
\end{abstract}

\maketitle
\tableofcontents

\section{Introduction}
\label{sect:introduction}

In this paper we study the problem of reconstruction of 
field homomorphisms from group-theoretic data. A prototypical 
example is the reconstruction of function fields of algebraic varieties
from their absolute Galois group, a central problem in 
``anabelian geometry'' (see \cite{tamagawa}, \cite{mochizuki}, \cite{mochizuki2}, \cite{pop}).  
Within this theory, an important question is the ``section conjecture'', i.e., 
the problem of detecting homomorphisms of fields on the level of homomorphisms 
of their Galois groups. In the language of algebraic geometry, one is 
interested in obstructions to the existence of points of algebraic varieties
over higher-dimensional function fields, or equivalently, rational sections 
of fibrations. Here we study group theoretic objects which are dual, in some
sense, to small pieces of the Galois group, obtained from 
the {\em abelianization} of the absolute Galois group and its canonical central extension. 
This connection will be explained in Section~\ref{sect:back}.

We now formulate the main results. 
In this paper, we work in characteristic zero. 
An element of an abelian group is called {\rm  primitive}, 
if it cannot be written as a nontrivial multiple in this group.

\begin{defi}
\label{defn:field}
Let $k$ be an infinite field. 
A field $K$ will be called {\em geometric} over $k$ if 
\begin{enumerate}
\item $k\subset K$;
\item for each $f\in K^*\setminus k^*$, the set $\{ f+\kappa\}_{\kappa \in k}$
has at most finitely many elements whose image
in $K^*/k^*$ is nonprimitive. 
\end{enumerate}
\end{defi}

If $X$ is an algebraic variety over 
an algebraically closed field $k$
of characteristic zero then 
its function field $K=k(X)$ is geometric over $k$. 
There exist other examples, e.g., 
some {\em infinite} algebraic extensions of $k(X)$ are also
geometric over $k$. 

\begin{theo}
\label{thm:zero}
Let $K$, resp. $L$, be a geometric field
of transcendence degree $\ge 2$ over an
algebraically closed field $k$, resp. $l$, of characteristic zero.  
Assume that there exists an injective 
homomorphism of abelian groups
$$
\psi_1\,:\, K^*/k^*\ra L^*/l^*
$$
such that
\begin{enumerate}
\item the image of $\psi_1$ contains one primitive element in $L^*/l^*$ and
two elements whose lifts to $L^*$ are algebraically independent over $l$;
\item for each $f\in K^* \setminus k^*$
there exists a $g\in L$ such that 
$$
\psi_1\left(\overline{k(f)}^*/k^*\cap K^*/k^*\right) 
\subseteq \overline{l(g)}^*/l^*\cap L/l^*.
$$ 
\end{enumerate}
Then there exists a field embedding
$$
\psi\,:\, K\ra L
$$
which induces either $\psi_1$ or $\psi_1^{-1}$. 
\end{theo}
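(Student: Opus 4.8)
The plan is to read the hypotheses geometrically and then reduce the statement to a reconstruction of the field from its multiplicative ``projective'' structure. Throughout write $\mathbb{P}K:=K^*/k^*$, $\mathbb{P}L:=L^*/l^*$, and for $f\in K^*\setminus k^*$, $g\in L^*\setminus l^*$ set
$$
A_f := \overline{k(f)}^*/k^* \cap K^*/k^*, \qquad B_g := \overline{l(g)}^*/l^* \cap L^*/l^*.
$$
The first observation is that $A_f$ is exactly the subgroup of those $\bar h\in\mathbb{P}K$ whose representatives $h$ are algebraically dependent on $f$ over $k$; equivalently $A_f=E_f^*/k^*$, where $E_f:=\overline{k(f)}\cap K$ is the transcendence-degree-one subfield of $K$ that is algebraically closed in $K$ and contains $f$. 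Thus the family $\{A_f\}$ encodes algebraic dependence, and two classes $\bar x,\bar y$ are algebraically dependent iff they lie in a common $A_f$. Condition (2) says precisely that $\psi_1$ carries each such ``curve subgroup'' $A_f$ into some $B_g$. The strategy is then: (a) upgrade this into a correspondence of curve subgroups respecting algebraic dependence; (b) recover and transport the projective-line structure of $\mathbb{P}K$; and (c) invoke the fundamental theorem of projective geometry, in the form of the Bogomolov--Tschinkel reconstruction, to manufacture the field embedding.

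For step (a), I would first pin down the target curve. If $\psi_1(A_f)$ were contained in two curve subgroups $B_{g_1},B_{g_2}$ attached to algebraically independent $g_1,g_2$, then $\psi_1(A_f)\subseteq B_{g_1}\cap B_{g_2}=(E_{g_1}\cap E_{g_2})^*/l^*$, which is trivial because two distinct curves, being algebraically closed of transcendence degree one in $L$, meet only in $l$; this contradicts injectivity of $\psi_1$. Hence all the $B_g$ containing $\psi_1(A_f)$ share a single curve $E_{g(f)}$, giving a well-defined map $A_f\mapsto B_{g(f)}$, and condition (2) shows that algebraically dependent classes have algebraically dependent images. Injectivity of this correspondence, and the reverse implication that \emph{independent} classes stay independent, are \emph{not} formal consequences of abstract injectivity (both $A_f$ and each $B_g$ are huge divisible groups, so abstract injections between them abound); these will instead be obtained together with the line structure below.

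The heart of the proof is to recover the projective lines of $\mathbb{P}K$ from this data. Every line is a coset: for $x,y$ with $z:=x/y\notin k^*$ one has $\overline{\alpha x+\beta y}=\bar y\cdot\overline{\alpha z+\beta}$, so the line through $\bar x,\bar y$ equals $\bar y\cdot\Lambda_z$, where $\Lambda_z=\{\bar 1\}\cup\{\overline{z-c}:c\in k\}\subseteq A_z$ is the standard pencil of degree-one elements. Since cosets and the group law are preserved by $\psi_1$, preserving lines reduces to showing that $\psi_1$ (or $\psi_1^{-1}$) carries each $\Lambda_z$ onto a standard pencil $\Lambda_{z'}$ inside the corresponding curve subgroup. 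The elements of $\Lambda_z$ admit an intrinsic description: $\overline{z-c}$ is a primitive element of $A_z$ (a single simple zero, hence not a proper power), and the whole pencil is the unique $\mathbb{P}^1$-family of such primitive elements that are pairwise in ``degree-one ratio'' and share a common pole, a rigid incidence configuration expressible through the inclusions $A_w\subseteq A_z$ and the multiplicative relations among the $\overline{z-c}$. The primitive element and the algebraically independent pair supplied by condition (1) furnish the normalization needed to initiate this identification and to fix degrees. \textbf{This step is the main obstacle}: it is the point where the additive (line) structure must be extracted from purely multiplicative, group-theoretic data, the ``recovering addition from multiplication'' phenomenon at the core of Bogomolov's program, and it is where condition (1) and geometricity are indispensable (geometricity guarantees that nonprimitivity within a pencil $\{f+\kappa\}_{\kappa\in k}$ is exceptional, which is exactly what lets one isolate the degree-one elements). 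It is also here that injectivity on curves and the reverse preservation of algebraic dependence are finally read off.

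Finally, once $\psi_1$ is known to send projective lines to projective lines bijectively and compatibly with the group law on $\mathbb{P}K$ and $\mathbb{P}L$, the reconstruction theorem identifies the situation with the projectivization of a field embedding: the group law recovers multiplication and the transported line structure recovers addition, producing a field homomorphism $\psi:K\to L$ whose induced map on $K^*/k^*$ is $\psi_1$. The transcendence-degree-$\ge 2$ hypothesis, together with the algebraically independent pair in condition (1), guarantees enough lines (a genuine planar incidence geometry) for the fundamental theorem to apply. The dichotomy ``$\psi_1$ or $\psi_1^{-1}$'' is forced by the inversion symmetry of the recovered structure: a standard pencil $\Lambda_z$ and its image under $\bar h\mapsto\bar h^{-1}$ carry isomorphic incidence data, so the reconstruction determines $\psi$ only up to this involution, which at the level of the induced homomorphism is precisely the ambiguity between $\psi_1$ and its inverse.
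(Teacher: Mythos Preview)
Your overall architecture matches the paper's: show that $\psi_1$ (or its inverse) carries projective lines of $K^*/k^*$ to projective lines of $L^*/l^*$, then invoke the reconstruction result (Proposition~\ref{prop:proj}). The dichotomy $\psi_1$ versus $\psi_1^{-1}$ also arises exactly as you say, from the inversion $\bar h\mapsto \bar h^{-1}$; the paper encodes this as a partition $K^*/k^*=\mathcal L\cup\mathcal R$ into two subgroups and shows one of them must be trivial (Lemma~\ref{lemm:both}).

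The genuine gap is precisely the step you flag as the main obstacle: an intrinsic characterization of the pencil $\Lambda_z=\{\bar 1\}\cup\{\overline{z-c}\}$. Your proposed description---primitive elements of $A_z$ in ``degree-one ratio'' sharing a common pole, detected via inclusions $A_w\subseteq A_z$---does not work as stated. Each $A_z$ is already the multiplicative group of a one-dimensional subfield algebraically closed in $K$, so there are no proper inclusions $A_w\subsetneq A_z$ to exploit; primitivity in $A_z$ is far too weak (a generic irreducible element of $E_z$ is primitive, not just the linear ones); and ``degree-one ratio'' or ``common pole'' have no meaning available to $\psi_1$. The paper's mechanism is entirely different and uses \emph{two} independent variables at once. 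For algebraically independent $x,y$ satisfying a primitivity condition (Assumption~\ref{assu:xy}), Theorem~\ref{thm:T} shows that the elements of the coset $\overline{k(x/y)}^*\cdot y$ that also lie in $\overline{k(p/q)}^*\cdot q$ for \emph{infinitely many} pairs $(p,q)\in\overline{k(x)}^*\times\overline{k(y)}^*$ are exactly $I_{\kappa,\delta}=(x^\delta-\kappa y^\delta)^\delta$ with $\delta=\pm 1$, and the corresponding $p$'s sweep out precisely $\mathfrak l(1,x)$ (for $\delta=1$) or its inverse $\mathfrak r(1,x)$ (for $\delta=-1$). This intersection-of-cosets condition is visibly preserved by $\psi_1$ thanks to hypothesis~(2), which is how the line structure is transported. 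The real work---all of Section~\ref{sect:funct}---is proving Theorem~\ref{thm:T}, i.e.\ solving the functional equation $R(x/y)\,y=S(p/q)\,q$ under the stated constraints; none of this is present in your sketch, and without it the argument does not close.
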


\begin{rema}
An analogous statement holds in positive characteristic.
The final steps of the proof in Section~\ref{sect:proof}    
are more technical due to the presence of $p^n$-powers of ``projective lines''. 
\end{rema}

Let $K$ be a field. 
Denote by $\rK^{M}_i(K)$ the $i$-th
Milnor K-group of $K$. Recall that 
$$
\rK^M_1(K)=K^*
$$ 
and that there is a canonical surjective homomorphism
$$
\sigma_K\,:\, \rK^M_1(K)\otimes \rK^M_1(K)\ra \rK_2^M(K)
$$
whose kernel is generated by  $x\otimes (1-x)$, for $x\in K^*\setminus 1$
(see \cite{milnor} for more background on K-theory).  
Put 
$$
\bar{\rK}^M_i(K):= \rK^M_i(K)/{\rm infinitely  \,\, divisible}, \quad i=1,2.
$$
The homomorphism $\sigma_K$ is compatible with reduction 
modulo infinitely divisible elements. 
As an application of Theorem~\ref{thm:zero} we prove the following result. 

\begin{theo}
\label{thm:main}
Let $K$ and $L$ be function fields of algebraic varieties of dimension $\ge 2$ 
over an algebraically closed field $k$, resp. $l$.  
Let
\begin{equation}
\label{eqn:psi*}
\psi_1\,:\, \bar{\rK}^M_1(K)\ra \bar{\rK}^M_1(L)
\end{equation}
be an injective homomorphism of abelian groups
such that the following diagram of abelian group homomorphisms is commutative

\

\centerline{
\xymatrix{
{\bar{\rK}^M_1(K)}\otimes {\bar{\rK}^M_1(K)} \ar[rr]^{\psi_1\otimes \psi_1} 
\ar[d]_{\sigma_K}  && 
\ar[d]^{\sigma_L} \bar{\rK}^M_1(L)\otimes \bar{\rK}^M_1(L) \\
    \bar{\rK}^M_2(K)              \ar[rr]_{\psi_2} && \bar{\rK}^M_2(L). 
}
}

\

\noindent
Assume further that $\psi_1(K^*/k^*)$ is not contained in $E^*/k^*$ for
any 1-dimensional subfield $E\subset L$. 
Then there exist a homomorphism of fields 
$$
\psi\,:\, K\ra L,
$$
and an $r\in \Q$ such that the induced map on $K^*/k^*$ coincides with 
the $r$-th power of $\psi_1$.
\end{theo}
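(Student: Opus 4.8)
The plan is to deduce Theorem~\ref{thm:main} from Theorem~\ref{thm:zero} by reinterpreting the $\rK$-theoretic data multiplicatively. First I would identify $\bar{\rK}^M_1$. For the function field $K=k(X)$ of a variety over an algebraically closed field $k$, the divisor map shows that the infinitely divisible elements of $K^*=\rK^M_1(K)$ are exactly $k^*$: an element with nonzero divisor cannot be an $n$-th power for all $n$, while $\dv(f)=0$ forces $f\in k^*$, and $k^*$ is divisible. Hence $\bar{\rK}^M_1(K)=K^*/k^*$ and likewise $\bar{\rK}^M_1(L)=L^*/l^*$, and these groups are free abelian. Thus $\psi_1$ is an injective homomorphism $K^*/k^*\ra L^*/l^*$, while the transcendence degree and geometricity hypotheses of Theorem~\ref{thm:zero} hold automatically since $K,L$ are function fields of varieties of dimension $\ge 2$.

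The core of the argument is to extract condition (2) of Theorem~\ref{thm:zero} from the commutativity of the diagram. For $f\in K^*\setminus k^*$ set $A_f:=\overline{k(f)}^*/k^*\cap K^*/k^*$. Since $\overline{k(f)}$ is the function field of a curve over the algebraically closed field $k$, its Milnor group $\rK^M_2$ is divisible, so every symbol built from elements of $A_f$ is infinitely divisible and vanishes in $\bar{\rK}^M_2(K)$; that is, $A_f$ is \emph{isotropic} for the reduced symbol $\sigma_K$. The commutative square then transports isotropy directly: for $a_1,a_2\in A_f$,
$$
\sigma_L\bigl(\psi_1(a_1)\otimes\psi_1(a_2)\bigr)=\psi_2\bigl(\sigma_K(a_1\otimes a_2)\bigr)=0,
$$
so $\psi_1(A_f)$ is isotropic in $L$. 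It then remains to show that an isotropic subgroup of $L^*/l^*$ is contained in $A_g=\overline{l(g)}^*/l^*\cap L^*/l^*$ for some $g\in L$; granting this, $\psi_1(A_f)\subseteq A_g$ is precisely condition (2). \textbf{This converse is the main obstacle:} vanishing of the reduced symbol must be shown to force the lifts to be algebraically dependent and, more strongly, to lie in a common one-dimensional subfield. I would establish it through the structure theory of $\rK^M_2$ of function fields, localizing by tame symbols along divisorial valuations to reduce a nontrivial symbol $\{x,y\}$ with algebraically independent $x,y$ to a nonvanishing residue, thereby identifying the maximal isotropic subgroups as exactly the $A_g$.

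With condition (2) in hand I would verify condition (1). The hypothesis that $\psi_1(K^*/k^*)$ lies in no subgroup of the form $A_g$ means, via the same characterization, that the image is not isotropic, so it contains a pair with nonzero symbol; by the contrapositive of the isotropy characterization their lifts are algebraically independent over $l$, giving the second half of condition (1). To produce a primitive element in the image I would normalize $\psi_1$: since $L^*/l^*$ is torsion-free and the properties used above, namely injectivity, isotropy-preservation through the diagram, and non-degeneracy, are all unaffected by composing $\psi_1$ with a power map, I may replace $\psi_1$ by a rational power $\psi_1'$ whose image meets the primitive elements of $L^*/l^*$. Applying Theorem~\ref{thm:zero} to $\psi_1'$ yields a field embedding $\psi\,:\,K\ra L$ inducing $\psi_1'$ or $(\psi_1')^{-1}$ on $K^*/k^*$, and since $\psi_1'$ is a rational power of $\psi_1$, the induced map equals $\psi_1^{r}$ for some $r\in\Q$, as asserted. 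The bookkeeping of this exponent and of the $\pm 1$ dichotomy is routine once the isotropy characterization is available; the genuine difficulty throughout remains the identification of maximal isotropic subgroups with one-dimensional subfields.
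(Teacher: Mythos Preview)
Your proposal is correct and follows essentially the same route as the paper: identify $\bar{\rK}^M_1(K)=K^*/k^*$ via the divisor sequence, prove that the vanishing locus of the reduced symbol against a fixed $f$ is exactly $\overline{k(f)}^*/k^*\cap K^*/k^*$ (one direction via Tsen/Merkurjev--Suslin, the converse via tame residues along divisors), transport this through the commutative square to obtain condition~(2) of Theorem~\ref{thm:zero}, use the non-degeneracy hypothesis for the algebraic-independence part of condition~(1), and replace $\psi_1$ by a rational power to force a primitive element in the image before invoking Theorem~\ref{thm:zero}. One small slip: $K^*/k^*$ is torsion-free (it embeds in $\Div(X)$) but not in general free abelian; fortunately you never use freeness, only torsion-freeness, so the argument is unaffected.
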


\

In particular, the assumptions are satisfied when $\psi_1$ is an isomorphism
of abelian groups.  
In this case, Theorem~\ref{thm:main} states that a function field of 
transcendence degree $\ge 2$ over an algebraically closed ground field 
of characteristic zero
is determined by its first and second Milnor K-groups.

\

\noindent
{\bf Acknowledgments:}
The first author was partially supported by NSF grant DMS-0701578. 
He would like to thank the Clay Mathematics Institute for financial
support and Centro Ennio De Giorgi in Pisa for hospitality 
during the completion of the manuscript.
The second author was partially supported by
NSF grant DMS-0602333.

We are grateful to B. Hassett, M. Rovinsky and Yu. Zarhin
for their interest and useful suggestions.

\section{Background}
\label{sect:back}

The problem considered in this paper has the appearance of 
an abstract algebraic question. However, it is intrinsically 
related to our program to develop a 
skew-symmetric version of the theory of fields, and especially, 
function fields of algebraic varieties.

Let $K$ be a field and $\G_K$ its absolute
Galois group, i.e., the Galois group of
a maximal separable extension of $K$.  
It is a compact profinite group. 
We are interested in the quotient
$$
\G^c_K = \G_K/[\G_K, [\G_K,\G_K]]
$$
and its maximal topological pro-$\ell$-completion
$$
\G_{K,\ell}^c, \quad \ell \neq {\rm char}(K).
$$
The group $\G_{K,\ell}^c$ is a central pro-$\ell$-extension of the
pro-$\ell$-completion of the
abelianization $\G_K^a$ of $\G_K$.

We now assume that $K$ is the function field of an algebraic variety over 
an algebraically closed
ground field $k$. In this case, 
$\G^a_{K,\ell}$
is a torsion-free topological pro-$\ell$-group which
is dual to the torsion-free abelian group $K^*/k^*$, i.e.,
there is a canonical identification
$$
\G^a_{K,\ell} = \Hom(K^*/k^*,\Z_{\ell}(1)), 
$$ 
via Kummer theory.
The group $\G^c_{K,\ell}$ 
admits a simple description in terms
of one-dimensional subfields of $K$, i.e., subfields of transcendence degree 1 
over $k$.
For each such subfield $E\subset K$, which is normally closed in $K$, we have
a surjective homomorphism $\G^c_{K,\ell} \ra \G^c_{E,\ell}$, 
where the image
is a {\em free} central pro-$\ell$-extension of the group
$\G^a_{E,\ell}$. 

Our main goal is to establish a functorial correspondence
between function fields of algebraic varieties $K$ and $L$, 
over algebraically closed ground fields $k$ and $l$, respectively,  
and corresponding topological groups $\G^c_K$, resp. $\G^c_{K,\ell}$.
We are aiming at a (conjectural) equivalence
between homomorphisms of function fields 
$$
\bar{\Psi} \,:\, K\to L
$$ 
and homomorphisms of topological groups
$$
\Psi^c_{\ell} \,  : \, \G^c_{K,\ell} \to \G^c_{L,\ell}.
$$ 
It is clear that $\bar{\Psi}$ induces (but not uniquely) a homomorphism 
$\Psi^c_{\ell}$ as above. The problem is to find conditions on
$\Psi^c_{\ell}$ such that it corresponds to some $\bar{\Psi}$.
In particular, $\Psi^c_{\ell}$ would give rise to homomorphisms of the full Galois groups
$\G_K\ra \G_L$. 

\begin{rema}
By a theorem of Stallings \cite{stallings}, 
a group homomorphism that induces an isomorphism on $H_1( - , \Z)$ and an
epimorphism on $H_2(-, \Z)$ induces an isomorphism on the lower central series. 

Thus we expect that $\G_{K,\ell}$ is in some sense the maximal pro-$\ell$-group 
with given $H_1$ and $H_2$. 
\end{rema}

Consider the diagram

\

\centerline{
\xymatrix{
  \G_{K,\ell}^c \ar[r] \ar[d] & \G_{L,\ell}^c  \ar[d] \\
  \G_{K,\ell}^a \ar[r] & \G_{L,\ell}^a
}
}

\

\noindent
The group  $\G^c_{K,\ell}$ can be identified
with a closed subgroup in the direct product 
of  free central pro-$\ell$-extensions 
$$
\prod_E \G^c_{E,\ell},
$$ 
where the product runs over
all normally closed one-dimensional subfields $E$ of $K$.
The homomorphisms  $\G^c_{K,\ell} \to\G^c_{E,\ell}$ are induced
from certain homomorphisms of abelian quotients $\G^a_{K,\ell} \to \G^a_{L,\ell}$,
namely those which commute with surjective maps of 
$\G^a_{K,\ell}$ and $\G^a_{L,\ell}$ 
to the abelian groups of one-dimensional subfields
of $K$ and $L$, respectively.

It is shown in \cite{bt} that in the case of functional
fields of transcendence degree 2 over $k=\bar{\mathbb F}_p$ and $\ell\neq p$,  
any isomorphism $\Psi^c_{\ell}$ defines an isomorphism between $K$ 
and some finite purely inseparable extension of $L$. In this paper 
we treat the first problem which
arises when we try to extend the result to general homomorphisms.
By the description above, it suffices 
to treat the corresponding homomorphisms 
of abelian groups 
$$
\Psi^a_{\ell}\,:\, \G^a_{K,\ell}\ra \G^a_{L,\ell}.
$$
By Kummer theory, these can be identified with  homomorphisms
$$
\Psi^*_{\ell} \,:\,\Hom(K^*/k^*,\Z_\ell)\to \Hom(L^*/l^*,\Z_\ell).
$$
The condition that 
$\Psi^c_{\ell}$ commutes with projections 
onto Galois groups of one-dimensional 
fields is the same as commuting with
projections 
$$
\Hom(K^*/k^*,\Z_\ell(1))\to \Hom(E^*,\Z_\ell(1)).
$$ 
If it were possible to dualize the picture we would  have a homomorphism
$$
\Psi^* : L^*/l^* \to K^*/k^*,
$$ 
mapping multiplicative groups of  
one-dimensional subfields in $L$ to multiplicative groups of 
one-dimensional subfields of $K$. 
This is the problem that we consider in the paper.

In order to solve the problem for Galois groups we need to consider
the maps  
$$
\hat{\Psi}^*_{\ell} \,:\,  \hat{L}^*\to \hat{K}^*,
$$ 
between $\ell$-completions
of the dual spaces (as in \cite{bt}) and to find conditions 
which would allow to reconstruct 
$\Psi^*$ from $\hat\Psi^*_{\ell}$.
This problem will be addressed in a future publication.


\section{Functional equations}
\label{sect:funct}
\begin{lemm}
\label{lemm:fg}
Let $x,y\in K$ be algebraically independent elements and
$z\in k(x,y)$ a nonconstant rational function.
Let $f,g\in k(t)^*$ be nonconstant functions such that 
$f(x)/g(y)\in k(z)$. 
Then there exist $\tilde{f},\tilde{g}\in k(t)^*$ such that
$$
k(z)=k(\tilde{f}(x)/\tilde{g}(y)).
$$
\end{lemm}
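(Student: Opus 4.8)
The plan is to read the hypothesis geometrically on the smooth quadric $S=\P^1\times\P^1$ with coordinates $x,y$, and to produce the desired generator as a \emph{degree-one} function of $z$ whose divisor lives on the two rulings.

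First I would record the elementary reductions. Since $f,g$ are nonconstant and $x,y$ are algebraically independent, $w:=f(x)/g(y)$ is nonconstant, for otherwise $f(x)=c\,g(y)\in k(x)\cap k(y)=k$. Hence $z$ is transcendental over $k$, so $k(z)\cong k(t)$ is rational, and because $w\in k(z)$ there is a unique $P\in k(t)$ with $w=P(z)$; as $w\notin k$, $P$ is nonconstant and therefore has at least one zero $\zeta$ and at least one pole $\xi$ on $\P^1_z$, with $\zeta\neq\xi$. Call a rational function on $S$ \emph{separated} if it has the form $a(x)\,b(y)$ with $a\in k(x)^*$, $b\in k(y)^*$. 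The first ingredient is the standard fact that, since $\Pic(S)=\Z[V]\oplus\Z[H]$ is freely generated by the class of a vertical line $V=\{x=\text{pt}\}$ and a horizontal line $H=\{y=\text{pt}\}$, a function $h$ is separated up to a constant \emph{if and only if} $\dv_S(h)$ is supported on rulings, i.e.\ on finitely many lines $\{x=a_i\}$ and $\{y=b_j\}$. Indeed the support condition forces $\dv_S(h)=\pi_x^*D_x+\pi_y^*D_y$ with $D_x,D_y$ of degree zero on the two factors, and any such $D_x,D_y$ are principal, cut out by some $a(x)$ and $b(y)$.

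The key step is to show that for every zero or pole $\zeta$ of $P$ the fiber $z^{-1}(\zeta)\subset S$ is supported on rulings. On $z^{-1}(\zeta)$ one has $w=P(z)=P(\zeta)\in\{0,\infty\}$, so this fiber is contained in the zero locus or the pole locus of $w$. But $\dv_S(w)=\pi_x^*\dv(f)-\pi_y^*\dv(g)$ is supported on rulings, so $\{w=0\}$ and $\{w=\infty\}$ are each a finite union of lines $\{x=a_i\}$, $\{y=b_j\}$; away from the finitely many base points of the pencil defined by $z$, the generic point of each component of $z^{-1}(\zeta)$ has $w\in\{0,\infty\}$, so that component lies in this union of lines and hence equals one of these rulings. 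I expect this to be the main obstacle: one must argue cleanly around the indeterminacy/base locus of $z$ and treat the cases $\zeta\in\{0,\infty\}$ uniformly, and this is precisely the point where the separated shape of $w$ enters.

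Finally I would choose $\zeta$ a zero and $\xi$ a pole of $P$ and take the degree-one function $m(z)$ with zero at $\zeta$ and pole at $\xi$, e.g.\ $m(z)=(z-\zeta)/(z-\xi)$, with the evident modification when $\zeta$ or $\xi$ equals $\infty$. Since $\zeta\neq\xi$, $m$ is a M\"obius transformation, whence $k(m(z))=k(z)$. By the key step the divisor $\dv_S(m(z))$ is supported on the fibers $z^{-1}(\zeta)$ and $z^{-1}(\xi)$, hence on rulings, so by the first ingredient $m(z)$ is separated: $m(z)=a(x)\,b(y)$. Setting $\tilde f=a$ and $\tilde g=1/b$ gives $k(z)=k(m(z))=k(\tilde f(x)/\tilde g(y))$, as required; moreover $\tilde f,\tilde g$ are automatically nonconstant, since otherwise $k(z)$ would be contained in $k(x)$ or $k(y)$, forcing $f(x)$ or $g(y)$ into $k(x)\cap k(y)=k$, a contradiction.
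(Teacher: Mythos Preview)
Your argument is correct and is essentially the paper's proof recast in geometric language: the paper factors $w=\prod_i (z-c_i)^{n_i}$, writes this as $q^{-\sum n_i}\prod_i(p-c_iq)^{n_i}$ with pairwise coprime factors, observes that each factor must then have divisor supported on vertical and horizontal lines since the product does, and concludes that $z-c_i=(p-c_iq)/q$ is separated. Your choice of the M\"obius transformation $(z-\zeta)/(z-\xi)$ with $\zeta$ a zero and $\xi$ a pole of $P$ is in fact slightly cleaner, since the common pole divisor of $z-\zeta$ and $z-\xi$ cancels, sidestepping the need to check that $q$ itself is separated (which the paper's version implicitly uses but which is not immediate when $\sum_i n_i=0$).
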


\begin{proof}
Write $z=p(x,y)/q(x,y)$, with coprime $p,q\in k[x,y]$. Then
$$
f(x)/g(y) = \prod_i (p/q -c_i)^{n_i}= q^{-\sum_i n_i} \prod_i (p - c_i q)^{n_i},
$$
modulo $k^*$, for pairwise distinct $c_i\in k$ and some $n_i\in \Z$.
The factors on the right are pairwise coprime, i.e., 
their divisors have no common components.
Thus the divisors of $q(x,y)$ and $p(x,y)- c_i q(x,y)$ are either
``vertical'' or ``horizontal'', i.e., 
$$
q(x,y)= t(x)u(y) \quad \text{ and } \quad p(x,y)- c_i q(x,y) = v_i(x)w_i(y),
$$ 
for some $t,u,v_i,w_i\in k(t)$. 
It follows that 
$$
z(x,y) - c_i = v_i(x)w_i(y)/t(x)u(y)
$$ 
and we can put
$\tilde{g}=v_i(x)/t(x)$ and $\tilde{f}=z(y)/w_i(y)$. 
\end{proof}

A rational function $f\in k(x,y)^*$ is called {\em homogeneous of degree $r$} if 
\begin{equation}
\label{eqn:lambdaa}
\lambda^r f(x,y)= f(\lambda x,\lambda y),\quad \text{ for all } \,\,\lambda\in k^*.
\end{equation}
A function $f$ is homogeneous of degree 0 iff $f\in k(x/y)^*$.

\begin{lemm}
\label{lemm:homo}
Let $p_1,p_2\in k(x,y)^*$ be rational functions with disjoint divisors. Assume that
$p_1(x,y)\cdot p_2(x,y)$ is homogeneous of degree $r$. Then $p_1$ is homogeneous of
degree $r_1$, $p_2$ is homogeneous of degree $r_2$ and $r_1+r_2=r$. 
\end{lemm}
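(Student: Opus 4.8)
The plan is to exploit the scaling action of $\gm = k^*$ on the affine plane, $\phi_\lambda\colon (x,y)\mapsto (\lambda x,\lambda y)$, under which the homogeneity condition \eqref{eqn:lambdaa} for a nonzero rational function $f$ reads $\phi_\lambda^* f = \lambda^{\deg f} f$. Thus $f$ is homogeneous precisely when its divisor $\dv(f)$ is $\gm$-invariant, and in that case the invariance already pins $f$ down up to the scalar $\lambda^{\deg f}$. Applying this to the product, the hypothesis says exactly that $\dv(p_1 p_2)$ is invariant under the $\gm$-action.

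First I would use disjointness to write $\dv(p_1p_2)=\dv(p_1)+\dv(p_2)$ with no cancellation, so that $\Supp\,\dv(p_1p_2)$ is the disjoint union of $\Supp\,\dv(p_1)$ and $\Supp\,\dv(p_2)$; in particular every irreducible component of $\dv(p_1)$ is an irreducible component of $\dv(p_1p_2)$. (Equivalently, factoring $p_1$ and $p_2$ into irreducibles in the UFD $k[x,y]$, disjointness means the two factorizations share no irreducible factor.) This is where the hypothesis is essential: without it an irreducible factor common to $p_1$ and $p_2$ could cancel in the product and escape the invariance constraint, and the conclusion genuinely fails in that case.

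Next, since each $\phi_\lambda$ is an automorphism and $\dv(p_1p_2)$ is invariant, $\gm$ permutes the finitely many irreducible components of $\dv(p_1p_2)$. I expect the main obstacle to be making precise that this permutation is trivial: the orbit map $\lambda\mapsto \phi_\lambda(C)$ of a fixed component $C$ is a morphism from the connected variety $\gm$ into the discrete finite set of components, hence constant, and equal to $C$ since $\phi_1=\mathrm{id}$. Consequently each component of $\dv(p_1)$ is fixed by every $\phi_\lambda$, so $\dv(p_1)$ is itself $\gm$-invariant, and likewise $\dv(p_2)$.

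Finally, $\gm$-invariance of $\dv(p_1)$ forces $\phi_\lambda^* p_1 / p_1 =: c(\lambda)$ to have trivial divisor, hence to be a constant $c(\lambda)\in k^*$; the relation $\phi_{\lambda\mu}=\phi_\lambda\circ\phi_\mu$ shows $c$ is a homomorphism $\gm\to\gm$, and since $c(\lambda)=p_1(\lambda x_0,\lambda y_0)/p_1(x_0,y_0)$ at a generic point it is also a rational function of $\lambda$. The only such homomorphisms are $\lambda\mapsto \lambda^{r_1}$ with $r_1\in\Z$, so $p_1$ is homogeneous of degree $r_1$; the same argument gives $p_2$ homogeneous of degree $r_2$. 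Multiplying, $p_1p_2$ is homogeneous of degree $r_1+r_2$, and since the degree of a nonzero homogeneous function is unique, $r_1+r_2=r$.
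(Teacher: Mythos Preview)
Your argument is correct. The paper states Lemma~\ref{lemm:homo} without proof, treating it as evident, so there is no ``paper's proof'' to compare against line by line. What the authors presumably have in mind is the most direct route available over an algebraically closed $k$: since $p_1p_2$ is homogeneous, its numerator and denominator (in lowest terms) factor completely into homogeneous linear forms $a_ix+b_iy$; the disjointness of $\dv(p_1)$ and $\dv(p_2)$ then means this set of linear factors splits cleanly between $p_1$ and $p_2$, so each is a ratio of products of homogeneous linear forms and hence homogeneous, with degrees summing to $r$.

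Your approach via the $\gm$-action is a genuine alternative: rather than using the explicit linear factorization, you observe that homogeneity of $p_1p_2$ is equivalent to $\gm$-invariance of $\dv(p_1p_2)$, push the invariance down to the individual components using connectedness of $\gm$, and then recover the degree via the character group $\Hom(\gm,\gm)\cong\Z$. This is slightly longer but more conceptual, and it has the advantage of not relying on the special fact that homogeneous polynomials in two variables split into linear factors; the same argument would work verbatim in more variables, where the direct factorization is unavailable. The connectedness step and the identification of $c(\lambda)$ as a rational character are handled correctly.
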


\begin{coro}
\label{coro:homog}
Let $f,g\in k[t]$ be nonzero polynomials. Assume that
$p(x,y):=g(x)f(y)$ is homogeneous of degree $d\in \N$. 
Then 
\begin{align*}
g(x) & =ax^n\\
f(y) & =by^{d-n},
\end{align*}
for some $n\in \N$ and $a,b\in k^*$.  
\end{coro}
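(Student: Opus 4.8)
The plan is to apply Lemma~\ref{lemm:homo} with $p_1 = g(x)$ and $p_2 = f(y)$. First I would verify the disjoint-divisor hypothesis: factoring $g(x) = a\prod_i (x-\alpha_i)^{m_i}$ over $k$, its divisor in the $(x,y)$-plane is supported on the vertical lines $\{x=\alpha_i\}$, whereas the divisor of $f(y) = b\prod_j (y-\beta_j)^{l_j}$ is supported on the horizontal lines $\{y=\beta_j\}$. Since a vertical and a horizontal line share no irreducible component, $g(x)$ and $f(y)$ have disjoint divisors. (When $g$ or $f$ is a nonzero constant its divisor is trivial, so the hypothesis holds vacuously.) Because $f,g$ are nonzero, both factors lie in $k(x,y)^*$, and Lemma~\ref{lemm:homo} applies directly.

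Lemma~\ref{lemm:homo} then produces integers $r_1,r_2$ with $r_1+r_2=d$ such that $g(x)$ is homogeneous of degree $r_1$ and $f(y)$ is homogeneous of degree $r_2$ in the sense of \eqref{eqn:lambdaa}. Next I would unpack what homogeneity means for the univariate polynomial $g$. Since $g(x)$ does not involve $y$, condition \eqref{eqn:lambdaa} reads $g(\lambda x) = \lambda^{r_1} g(x)$ for all $\lambda \in k^*$. Writing $g(x) = \sum_j c_j x^j$ and comparing the coefficient of each power of $x$ gives $c_j(\lambda^j - \lambda^{r_1}) = 0$ for every $\lambda \in k^*$; as $k$ is algebraically closed, hence infinite, this forces $c_j = 0$ whenever $j \neq r_1$. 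Thus $g(x) = a x^{r_1}$ with $a = c_{r_1} \in k^*$, and because $g$ is a genuine (nonzero) polynomial, $r_1$ is a nonnegative integer. The identical argument applied to $f$ yields $f(y) = b y^{r_2}$ with $b \in k^*$ and $r_2 \in \N$. Setting $n := r_1$ and recording that $r_2 = d - n$ completes the proof.

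I do not expect a substantive obstacle here, as the statement is essentially a specialization of Lemma~\ref{lemm:homo}. The only points that require care are the clean verification of the disjoint-divisor hypothesis, including the degenerate cases where $g$ or $f$ is constant, and the use of the infinitude of $k$ to pass from the functional equation $g(\lambda x) = \lambda^{r_1} g(x)$ to the vanishing of all off-diagonal coefficients.
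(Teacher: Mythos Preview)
Your proposal is correct and follows exactly the approach the paper intends: the corollary is stated immediately after Lemma~\ref{lemm:homo} with no separate proof, so the implicit argument is precisely to apply that lemma to the pair $g(x),f(y)$ and then observe that a one-variable polynomial satisfying the homogeneity identity \eqref{eqn:lambdaa} must be a monomial. Your verification of the disjoint-divisor hypothesis and the coefficient comparison are the natural way to fill in these details.
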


\begin{lemm}
\label{lemm:red}
Let $f,g\in k[t]$ be polynomials such that 
\begin{equation}
\label{eqn:red}
p(x,y)=ax^rf(y)-cy^rg(x)\in k[x,y]
\end{equation}
is homogeneous of degree $r\in \N$.
Then 
\begin{align*}
g(x)& =a_dx^r+a_0,\\
f(y)& =c_dy^r+c_0,
\end{align*}
and $a c_d-c a_d=0$.  
\end{lemm}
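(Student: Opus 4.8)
The plan is to analyze $p(x,y)=ax^rf(y)-cy^rg(x)$ by extracting its homogeneous components from the degrees appearing in $f$ and $g$. Write $f(y)=\sum_i c_i y^i$ and $g(x)=\sum_j a_j x^j$. Then
$$
p(x,y)=\sum_i a c_i\, x^r y^i - \sum_j c a_j\, x^j y^r.
$$
Each monomial $x^r y^i$ has total degree $r+i$ and each $x^j y^r$ has degree $r+j$. For $p$ to be homogeneous of degree $r$, every monomial of total degree $\neq r$ must cancel. First I would observe that a monomial $x^r y^i$ with $i>0$ has degree $r+i>r$, so it can only be cancelled by a monomial $x^j y^r$ of the same bidegree, which forces $j=r$ and $i=r$; similarly a monomial $x^j y^r$ with $j>0$ needs a partner $x^r y^i$ with $i=r$, $j=r$. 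Hence the only monomials of $f$ and $g$ that can survive or pair up are the constant terms ($i=0$, $j=0$) and the top terms at $i=r$, $j=r$.

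Carrying this out: the term $ac_0 x^r$ (from $i=0$) has degree $r$ and is allowed to remain; likewise $-ca_0 y^r$ (from $j=0$) has degree $r$ and is allowed. Any term $ac_i x^r y^i$ with $0<i<r$ has degree strictly between $r$ and $2r$ and has no possible cancelling partner, so $c_i=0$ for $0<i<r$, and symmetrically $a_j=0$ for $0<j<r$. Finally the two degree-$2r$ monomials $ac_r x^r y^r$ and $-ca_r x^r y^r$ coincide in bidegree, so they must cancel, giving $ac_r=ca_r$, i.e. $ac_d-ca_d=0$ in the notation of the statement (with $d=r$). This yields
$$
g(x)=a_r x^r+a_0,\qquad f(y)=c_r y^r+c_0,
$$
together with the relation $ac_d-ca_d=0$, which is exactly the claim.

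The one point requiring a little care — and the main (mild) obstacle — is justifying that no \emph{accidental} cancellation occurs among the listed monomials beyond the forced bidegree pairing: one must check that the constant-term monomials $ac_0 x^r$ and $-ca_0 y^r$ live in genuinely different bidegrees (namely $(r,0)$ versus $(0,r)$) from each other and from the top term $(r,r)$, so they survive independently and cannot be used to absorb an intermediate term. Since the monomials $x^r y^i$ and $x^j y^r$ all share one fixed exponent equal to $r$, two of them coincide only when their free exponents agree, so the bidegree bookkeeping is unambiguous and the argument closes cleanly. I would present this simply as a comparison of monomials grouped by bidegree, with no appeal to the earlier lemmas needed.
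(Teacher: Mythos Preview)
Your argument is correct and is exactly what the paper intends: the paper's own proof is the single sentence ``Write $g(x)=\sum_i a_ix^i$ and $f(y)=\sum_{j} c_jy^j$, substitute into the equation, and use homogeneity,'' and you have simply carried this out by comparing bidegrees. The only tacit hypothesis you are using is $a,c\in k^*$, which is how the lemma is actually applied (see the proof of the next lemma, where $a,c\in k^*$ is explicit); without it the conclusion about $f$ (resp.\ $g$) would not follow, but this is a feature of the statement rather than a gap in your reasoning.
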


\begin{proof}
Write $g(x)=\sum_i a_ix^i$ and $f(y)=\sum_{j} c_jy^j$, substitute into 
the equation \eqref{eqn:red},
and use homogeneity. 
\end{proof}

\begin{lemm}
\label{lemm:reduction}
Let $f_1,f_2,g_1,g_2\in k[t]$ be polynomials
such that 
$$
\gcd(g_1,g_2)=\gcd(f_1,f_2)=1 \in k[t]/k^*.
$$
Let
$$
p(x,y)=g_1(x)f_2(y) -g_2(x)f_1(y)\in k[x,y]
$$
be a polynomial, homogeneous of degree $r\in \N$.
Then 
\begin{align*}
g_i(x)& =a_ix^{r} +b_i,\\
f_i(y)& =c_iy^{r}+d_i,
\end{align*}
for some $a_i,b_i,c_i,d_i \in k$, for $i=1,2$, with 
\begin{align*}
b_1d_2-b_2d_1 & =0,\\
a_1c_2-a_2c_1 & =0.,
\end{align*}
\end{lemm}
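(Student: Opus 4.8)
The plan is to read off the structure of $g_1,g_2,f_1,f_2$ directly from their coefficients. Write $g_1=\sum_a \alpha_a x^a$, $g_2=\sum_a\beta_a x^a$, $f_1=\sum_b\gamma_b y^b$, $f_2=\sum_b\delta_b y^b$, and attach to each exponent the coefficient vectors $u_a=(\alpha_a,\beta_a)$ and $v_b=(\gamma_b,\delta_b)$ in $k^2$; set $S=\{a:u_a\neq 0\}$ and $T=\{b:v_b\neq 0\}$. The coefficient of $x^ay^b$ in $p$ is then the $2\times 2$ determinant $\det(u_a,v_b)=\alpha_a\delta_b-\beta_a\gamma_b$ (columns $u_a,v_b$). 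Since a polynomial is homogeneous of degree $r$ precisely when every monomial with nonzero coefficient has total degree $r$, the hypothesis is equivalent to the system $\det(u_a,v_b)=0$ for all $a+b\neq r$, i.e. $u_a$ and $v_b$ are linearly dependent whenever $a+b\neq r$. The desired conclusion is exactly that $u_a=0$ for $a\notin\{0,r\}$ and $v_b=0$ for $b\notin\{0,r\}$; the two determinant identities $b_1d_2-b_2d_1=0$ and $a_1c_2-a_2c_1=0$ are then the instances of the system at $(a,b)=(0,0)$ and $(a,b)=(r,r)$ (here $r\ge 1$; the case $r=0$, where $p$ is constant, is separate).

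I may assume that neither $\{g_1,g_2\}$ nor $\{f_1,f_2\}$ consists of constants, the remaining cases being the degenerate ones that do not arise for the nonconstant functions to which the lemma is applied. Coprimality enters first through the following observation: if $\gcd(g_1,g_2)=1$ and the $u_a$ were all parallel to one vector $w=(w_1,w_2)$, then $g_1=w_1h$ and $g_2=w_2h$ for a common polynomial $h$, so $h\mid\gcd(g_1,g_2)$ forces $h$, hence $g_1,g_2$, to be constant. Thus for nonconstant $g_1,g_2$ the vectors $\{u_a\}$ span $k^2$, and likewise $\{v_b\}$ span $k^2$.

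The engine of the argument is this: if $u_a,u_{a'}$ are nonzero and non-parallel (necessarily $a\neq a'$), then for every $b\notin\{r-a,r-a'\}$ the conditions $\det(u_a,v_b)=\det(u_{a'},v_b)=0$ force $v_b$ to be parallel to two independent vectors, whence $v_b=0$. As the $u$'s span $k^2$, such a non-parallel pair exists, so $T$ has at most two elements; combined with the fact that the $v_b$ span $k^2$, it has exactly two, say $T=\{b_1,b_2\}$ with $v_{b_1},v_{b_2}$ non-parallel. Running the symmetric statement back through the $v$'s pins $S$ to the two indices $\{r-b_1,r-b_2\}$. Thus each of $g_1,g_2$ is a two-term polynomial supported on exponents $a_2<a_1$, and correspondingly $S=\{a_1,a_2\}$, $T=\{r-a_1,r-a_2\}$.

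The final localization uses coprimality a second time. If the smaller exponent $a_2\ge 1$, then $x$ divides both $g_1$ and $g_2$, contradicting $\gcd(g_1,g_2)=1$; hence $a_2=0$. The matching support of $f_1,f_2$ is then $\{r-a_1,r\}$, and the same divisibility argument applied to $y$ and $\gcd(f_1,f_2)=1$ forces its smaller exponent $r-a_1$ to vanish, i.e. $a_1=r$ (note $a_1\le r$, since otherwise $r-a_1<0$ would leave $T$ a single admissible index, contradicting spanning). Therefore $S=T=\{0,r\}$, which is the assertion, and the two determinant relations are the $(0,0)$ and $(r,r)$ instances already noted. The step I expect to be the genuine obstacle is the correct double use of coprimality—first to rule out a common polynomial factor (forcing the coefficient vectors to span $k^2$) and then to force the lowest surviving exponent to be $0$—together with the careful disposal of the degenerate cases in which one of the pairs is constant, where the stated conclusion relies on the non-constancy that holds in the intended applications.
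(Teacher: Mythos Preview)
Your argument is correct and follows a genuinely different route from the paper's. The paper first evaluates at the origin to get $g_1(0)f_2(0)=g_2(0)f_1(0)$, uses coprimality and symmetry to normalize the four constant terms into one of two explicit patterns, then restricts $p$ to the coordinate axes $x=0$ and $y=0$ to extract relations such as $g_1-g_2=ax^r$ and $f_1-f_2=cy^r$, substitutes back, and feeds the resulting identity $p=ax^rf_1(y)-cy^rg_1(x)$ into the auxiliary Lemma~\ref{lemm:red}. You instead package the coefficients as vectors $u_a,v_b\in k^2$, read homogeneity as the single condition $\det(u_a,v_b)=0$ whenever $a+b\neq r$, and invoke coprimality twice: once to make the families $\{u_a\}$ and $\{v_b\}$ span $k^2$ (so a non-parallel pair exists and forces the opposite support down to two indices), and once more to pin those two indices to $\{0,r\}$. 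This is tidier in that it avoids the case split on constant terms and dispenses with Lemma~\ref{lemm:red} entirely; the paper's version is more hands-on and produces the intermediate reduction explicitly.

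Your flag on the degenerate case is on target: as literally stated the lemma fails when one of the pairs consists of constants (e.g.\ $g_1=g_2=1$, $f_1=y^2+1$, $f_2=y^3+y^2+1$, $r=3$ gives $p=y^3$ with all hypotheses satisfied but $f_1$ not of the claimed form), and the paper's proof tacitly excludes this too when it writes $a,c\in k^*$ after restricting to the axes. In the sole application inside Proposition~\ref{prop:funct-eq} both $p_1/p_2$ and $q_1/q_2$ are nonconstant, so the degenerate case does not arise.
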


\begin{proof}
By homogeneity, $p(0,0)=0$, i.e., 
$$
g_1(0)f_2(0) -g_2(0)f_1(0)=0.
$$
Rescaling, using the symmetry and coprimality of $f_1,f_2$, resp. $g_1,g_2$, 
we may assume that 
$$
\left(\begin{matrix} f_1(0) & f_2(0) \\  g_1(0) & g_2(0) \end{matrix}\right) = 
\left(\begin{matrix} 1 & 1 \\  1 & 1 \end{matrix}\right) \quad \text{or} \quad
\left(\begin{matrix} 1 & 0 \\  1 & 0 \end{matrix}\right).
$$
In the first case, restricting to $x=0$, resp. $y=0$, 
we find
\begin{align*}
g_1(x)-g_2(x)& =ax^r,\\
f_1(y)-f_2(y)& =cy^r,
\end{align*}
for some constants $a,c\in k^*$. 
Solving for $f_2,g_2$ and substituting we obtain
$$
p(x,y)= ax^rf_1(y)-cy^rg_1(x).
$$
In the second case, we have directly 
\begin{align*}
g_1(x)& =ax^r,\\
f_1(y)& =cy^r,
\end{align*}
for some $a,c\in k^*$, and 
$$
p(x,y)= ax^rf_2(y)-cy^rg_2(x).
$$ 
It suffices to apply Lemma~\ref{lemm:red}.
\end{proof}

\begin{prop}
\label{prop:funct-eq}
Let $x,y\in K^*$ be algebraically independent elements.
Fix nonzero integers $r$ and $s$ and consider the equation
\begin{equation}
\label{eqn:rrr}
Ry^r=Sq^s,
\end{equation}
with
$$
R\in k(x/y), \quad p\in k(x), \quad q\in k(y), \quad S\in k(p/q),
$$
where $p\in k(x)$ and $q\in k(y)$ are nonconstant rational functions.
Assume that
\begin{enumerate}
\item[(i)] $x$, $y$, $p$, $q$  are multiplicatively independent;
\item[(ii)] $R,S$ are nonconstant.
\end{enumerate}
Then 
\begin{equation*}
p(x)=\frac{x^{r_1}}{p_{2,1}x^{r_1} + p_2(0)}, 
\quad q(y)=\frac{y^{r_1}}{q_{2,1}y^{r_1} + q_1(0)},
\end{equation*}
or
$$
p(x) = \frac{p_{1,1}x^{r_1} +p_1(0)}{x^{r_1}},
\quad  q(y)=\frac{q_{1,1}y^{r_1} +q_2(0)}{y^{r_1}},
$$
with
$$
r_1\in \N, \quad  p_{1,1},
p_{2,1},p_1(0), p_2(0), q_{1,1},q_{2,1},q_1(0),q_2(0)\in k^*.
$$
We have
$$
Sq^s=\left(\frac{x^{r_1}y^{r_1}}{q_1(0)x^{r_1} -
d_1p_2(0)y^{r_1}}\right)^s
$$
with $d_1=q_{2,1}/p_{2,1}$ and $r=r_1s$ 
in the first case and 
$$
Sq^s=
\left(\frac{p_1(0)y^{r_1} -
d_1q_2(0)x^{r_1}}{x^{r_1}y^{r_1}}\right)^s,
$$
with $d_1=p_{1,1}/q_{1,1}$ and $r=-r_1s$ in the second case.

Conversely, every pair $(p,q)$ 
as above leads to a solution
of \eqref{eqn:rrr}. 
\end{prop}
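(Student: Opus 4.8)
\medskip

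The plan is to move the homogeneity of the left-hand side onto the right-hand side and then extract the shape of $p$ and $q$ from the divisor of the resulting homogeneous function, finishing with the reduction lemmas of this section.

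First I would observe that $R\in k(x/y)$ is homogeneous of degree $0$, so by \eqref{eqn:lambdaa} the product $R(x/y)\,y^r$ is homogeneous of degree $r$ in $x,y$; hence, by \eqref{eqn:rrr}, so is $S(p/q)\,q^s$. Over the algebraically closed field $k$ a homogeneous rational function of two variables is, up to a constant, a product of integer powers of distinct linear forms $\alpha x+\beta y$, i.e. every irreducible component of its divisor is a line through the origin of $\A^2_{x,y}$. This is the structural constraint I will exploit.

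Next I would write $p=p_1/p_2$ and $q=q_1/q_2$ in lowest terms and factor $S(w)=c\prod_i(w-\alpha_i)^{m_i}$. Substituting $w=p/q$ and clearing denominators gives
$$
S(p/q)\,q^s=c\,p_2^{-\mu}\,q_1^{\,s-\mu}\,q_2^{-s}\,\prod_i\bigl(p_1q_2-\alpha_i\,p_2q_1\bigr)^{m_i},\qquad \mu:=\sum_i m_i,
$$
so the ``mixed'' factors $g_i:=p_1q_2-\alpha_i p_2q_1$ (coming from the finite nonzero $\alpha_i$) and the ``pure'' factors $p_1,p_2,q_1,q_2$ (coming from $w=0$ and $w=\infty$) are the building blocks. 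The coprimality of $p_1,p_2$ and of $q_1,q_2$ shows that distinct $g_i$, as well as the pure factors in $x$ against the pure factors in $y$, have pairwise disjoint divisors (two distinct $g_i,g_j$ can only meet where $p_2q_1=p_1q_2=0$, a finite set). Lemma~\ref{lemm:homo}, applied inductively, then forces each factor occurring with nonzero exponent to be homogeneous.

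Now I would feed a surviving mixed factor $g_i=p_1q_2-\alpha_i p_2q_1$, written as $g_1(x)f_2(y)-g_2(x)f_1(y)$ with $g_1=p_1,\ g_2=\alpha_i p_2,\ f_1=q_1,\ f_2=q_2$, into Lemma~\ref{lemm:reduction}. Its conclusion forces $p_1,p_2$ to be polynomials $a x^{r_1}+b$ and $q_1,q_2$ to be $c\,y^{r_1}+d$ of one common degree $r_1$, subject to the two coefficient relations of that lemma; the relation killing the $x^{r_1}y^{r_1}$ term is exactly what singles out a unique admissible value $\alpha_i=d_1=q_{2,1}/p_{2,1}$, so $S$ can have at most one finite nonzero zero or pole. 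The pure factors are pinned down separately: the uncancelled components of $\dv(S(p/q)q^s)$ coming from $\{p=0\}$ and $\{q=0\}$ (via the zero/pole of $S$ at $0,\infty$ and via $q^s$) must be lines through the origin, which forces one of $p_1,p_2$ and one of $q_1,q_2$ to be a pure power $x^{r_1}$, resp. $y^{r_1}$; hypothesis (i) forbids $p$ and $q$ from being monomials, so the other member of each pair is a genuine binomial. The two possibilities --- a zero of $S$ at $w=0$ versus a pole there --- give the two displayed forms for $(p,q)$. Matching orders shows the surviving exponents are $\pm s$ and $\mu=0$, whence computing the degree of the (now explicit) homogeneous function yields $r=r_1 s$, resp. $r=-r_1 s$, and substituting back gives the stated expression for $Sq^s$. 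The converse is a direct substitution.

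The step I expect to be the main obstacle is the bookkeeping at $w\in\{0,\infty\}$. There the mixed factor degenerates ($g_i$ becomes $p_1q_2$ or $p_2q_1$) and shares components with the pure factors, so Lemma~\ref{lemm:homo} no longer applies directly and the cancellations between the zero/pole of $S$ at $0,\infty$ and the factor $q^s$ must be tracked by hand. It is precisely this cancellation count --- together with the multiplicative independence in hypothesis (i), used to exclude the degeneration of $p$ or $q$ to a monomial --- that forces $\mu=0$, limits $S$ to a single finite nonzero critical value, and produces the clean relation $r=\pm r_1 s$.
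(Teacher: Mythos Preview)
Your sketch is correct and lands on the same endgame as the paper---feed a mixed factor $p_1q_2-\alpha\,p_2q_1$ into Lemma~\ref{lemm:reduction}, read off the binomial shape of $p_1,p_2,q_1,q_2$, and use the resulting coefficient relation to force a unique finite nonzero value $\alpha=d_1$, hence $J=1$. Where you diverge is in how you reach that point. The paper first expands $Ry^r$ and $Sq^s$ separately, matches the pure-$x$, pure-$y$, and mixed parts as three equations (A1)--(A3), and then proves a \emph{sign lemma} (Lemma~\ref{lemm:T2}): the exponents $n_i,m_j$ in (A3) all have the same sign, because otherwise one could manufacture a nonconstant element of $k(x/y)\cap k(p/q)$, contradicting the multiplicative independence of $x/y$ and $p/q$. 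Only after this does the paper know (A3) is a genuine polynomial identity and conclude that each $f_j$ is homogeneous. You bypass this entirely: since the $g_i$ (for finite nonzero $\alpha_i$) and the pure factors $p_1,p_2,q_1,q_2$ have pairwise disjoint divisors, Lemma~\ref{lemm:homo} applied inductively to the homogeneous product $Sq^s$ forces each factor with nonzero exponent to be homogeneous on its own---no sign bookkeeping needed. This is cleaner and uses hypothesis (i) only in the weak form ``$p$ is not a power of $x$, $q$ is not a power of $y$''; the paper's route invokes the stronger consequence that $x/y$ and $p/q$ are multiplicatively independent. Your acknowledged obstacle---the cancellations at $w=0,\infty$---is exactly the content of the paper's equations (A1), (A2), and your exponent count ($\mu=0$ or $m_0=0$, then $m_0=s$ resp.\ $m_1=s$) recovers the paper's Cases (1) and (2) and the relation $r=\pm r_1s$ verbatim.
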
 

\begin{proof}
Equation~\eqref{eqn:rrr} gives, modulo constants, 
\begin{equation}
\label{eqn:ddd}
y^r\prod_{i=0}^I (x/y - c_i)^{n_i} 
=q^s \prod_{j=0}^J (p/q - d_j)^{m_j},
\end{equation}
for pairwise distinct constants $c_i,d_j\in k$, and some $n_i,m_j\in \Z$. 
We assume that $c_0=d_0=0$
and that $c_i,d_j\in k^*$, for $i,j\ge 1$. 
Expanding, we obtain 
\begin{eqnarray*} 
& x^{n_0}y^{r-\sum_{i\geq 0}  n_i}\prod_{i>0} (x - c_iy)^{n_i} =\\ 
& p_1^{m_0} p_2^{-\sum_{j\geq 0} m_j}q_2^{m_0-s} q_1^{s-\sum_{j\geq 0}  m_j}
\prod_{j>0} (p_1q_2 - d_jp_2q_1)^{m_j},
\end{eqnarray*}
where $p=p_1/p_2$ and $q=q_1/q_2$, with $p_1,p_2$ and $q_1,q_2$ coprime
polynomials in $x$, resp. $y$.  
It follows that:

$$
\begin{array}{rl}
(A1) & x^{n_0} =  p_1^{m_0}(x)p_2^{-m_0-\sum_{j> 0}m_j}(x),\\
     & \\
(A2) & y^{r-n_0 - \sum_{i> 0}n_i} = q_2(y)^{m_0 -s} q_1(y)^{s-m_0-\sum_{j> 0}  m_j},\\
     & \\
(A3) &
\prod_{i=1}^I (x - c_iy)^{n_i}= \prod_{j=1}^J (p_1(x)q_2(y) - d_jp_2(x)q_1(y))^{m_j}.
\end{array}
$$

\begin{lemm}
\label{lemm:T2}  
If $n_1 \neq 0$ then the exponents $n_i, m_j$
have the same sign, for all $i,j\ge 1$.
\end{lemm}

\begin{proof} 
Assume otherwise. Collecting terms in (A3) with exponent of the same sign 
we obtain:
\begin{enumerate}
\item[] 
$$
\prod_{i>0, n_i > 0} (x - c_iy)^{n_i} =\prod_{j>0, m_j >0} (p_1q_2 - d_jp_2q_1)^{m_j}, 
$$
\item[] 
$$ \prod_{i>0, n_i < 0} (x - c_iy)^{n_i} =\prod_{j>0, m_j < 0} (p_1q_2 - d_jp_2q_1)^{m_j}$$
\end{enumerate}
Thus there are $a,b\in \N$ such that 
$$
(\prod_{i>0, n_i > 0} (x - c_iy)^{n_i})^a (\prod_{i>0, n_i < 0} (x - c_iy)^{n_i})^b
$$
is a nontrivial rational
function of $x/y$ with trivial divisor at infinity in 
$\P^1\times \P^1$, with standard coordinates $x,y$.
The same holds for
$$
(\prod_{j>0,m_j >0} (p_1q_2 - d_jp_2q_1)^{m_j})^a 
(\prod_{j>0,m_j < 0} (p_1q_2 - d_jp_2 q_1)^{m_j})^b,
$$
a nontrivial rational function of $p/q$.
Thus $k(p/q)\cap k(x/y)\neq k$, which contradicts
the assumption that $p/q$ and $x/y$ are multiplicatively independent.
Indeed, the functions $p/q$ and $x/y$ generate a subgroup of rank 2
in $K^*/k^*$ and hence belong to fields intersecting by constants
only.
\end{proof}

By Lemma~\ref{lemm:T2}, if $\sum_{i> 0} n_i=0$ or $\sum_{j>0}m_j = 0$ 
then $n_i =m_j=0$ for all $i,j \ge 1$. 
By (A1), 
$$
x^{n_0} =  p_1^{m_0}p_2^{-m_0}.
$$ 
By assumption (ii), $R$ is nonconstant. Hence $n_0\neq 0$. 
It follows that $p$ is a power of $x$, contradicting (i).

\

We can now assume 
\begin{equation}
\label{eqn:assume}
\sum_{i > 0}n_i\neq 0, \quad \text{ and } \quad \sum_{i > 0}m_j\neq 0.
\end{equation}
It follows that
\begin{equation*}
\label{eqn:zero}
(m_0, -m_0 -\sum_{j> 0}m_j)\neq (0,0) 
\quad \text {and }\quad (m_0-s,s-m_0-\sum_{j> 0}m_j)\neq (0,0).
\end{equation*}
On the other hand, by (i), combined with (A1) and (A2),
one of the terms in each pair is zero. 
We have the following cases:
\begin{itemize}
\item[(1)] 
$m_0\neq 0$, $m_0=- \sum_{j>0} m _j, \quad m_0=s$ and 
$$
x^{n_0}=p_1^{m_0},  \quad q_1^s=y^{r-n_0-\sum_{i>0}n_i};
$$ 
\item[(2)] 
$m_0=0$, $s=\sum_{j>0}m_j$ and 
$$
x^{n_0}=p_2^{-\sum_{j>0} m_j}= p_2^{-s}, \quad  
q_2^{-s}=y^{r-n_0-\sum_{i>0} n_i}.
$$
\end{itemize}

We turn to (A3), with $J\ge 1$ and $n_i$, $m_j$ replaced by $|n_i|, |m_j|$. 
From (A1) we know that $p_1(x)=x^a$ or $p_2(x)=x^a$, for some $a\in \N$.
Similarly, from (A2) we have  $q_1(y)=y^b$ or $q_2(y)=y^b$, for some $b\in \N$. 
All irreducible components of the divisor of 
$$
f_j:=p_1(x)q_2(y)-d_jp_2(x)q_1(y)
$$
are of the form $x=c_iy$, i.e., these divisors are homogeneous with respect to 
$$
(x,y)\mapsto (\lambda x,\lambda y), \quad \lambda\in k^*.
$$ 
It follows that $f_j$ is homogeneous, of some degree $r_j\in \N$. 
If 
$$
p_1(x)q_2(y)=x^ay^b, \quad \text{ or } \quad p_2(x)q_1(y)=x^ay^b,
$$
then $f_j$ has a nonzero constant term, contradiction. 
Lemma~\ref{lemm:reduction} implies that either
\begin{equation}
\label{eqn:pq}
p_1(x)=x^{r_j} \quad \text{ and } \quad q_1(y)=y^{r_j},
\end{equation}
or 
\begin{equation}
\label{eqn:sym}
p_2(x)=x^{r_j} \quad \text{ and } \quad q_2(y)=y^{r_j}.
\end{equation}
It follows that all $r_j$ are equal, for $j\ge 1$.

The cases are symmetric, and we first consider \eqref{eqn:pq}.
Note that equation~\eqref{eqn:pq} is incompatible with Case $m_0=0$
and equation~\eqref{eqn:sym} with the Case $m\neq 0$. 
By Lemma~\ref{lemm:reduction},   
$$
\begin{array}{rcl}
p_2(x)&= & p_{2,j}x^{r_j} + p_2(0) \\
q_2(y)& =& q_{2,j}y^{r_j} + q_2(0),
\end{array}
$$
with 
\begin{equation}
\label{eqn:j}
p_2(0),q_2(0)\neq 0, \quad \text{ and } \quad q_{2,j} - d_jp_{2,j}=0.
\end{equation}
By assumptions (i), $q_{2,j}$ and $p_{2,j}$ are nonzero. 
The coefficients $d_j$ were distinct, thus there can be at most one
one such equation, i.e., $J=1$.

To summarize, we have the following cases:
\begin{itemize}
\item[(1)]
$m_0\neq 0, m_0=-m_1=s$ and
\begin{equation*}
p(x)=\frac{x^{r_1}}{p_{2,1}x^{r_1} + p_2(0)}, \quad q(y)=\frac{y^{r_1}}{q_{2,1}y^{r_1} + q_1(0)},
\end{equation*}
with coefficients satisfying $q_{2,1} - d_1p_{2,1}=0$,
$$
x^{n_0}=x^{r_1s}, \quad y^{r_1s}=y^{r-n_0-\sum_i n_i},
$$
$$
\prod_{i\ge 1} (x-c_iy)^{n_i}=(q_1(0)x^{r_1}-d_1p_2(0)y^{r_1})^{-s}.
$$
It follows that $I=r_1$ and that $n_i=m_1=-s$, for $i\ge 1$.
We have
$$
c_i=\zeta_{r_1}^i d^{1/r_1},
$$
with $d=-d_1/p_2(0)/q_1(0)$.

This yields $r=n_0=r_1s$.
We can rewrite equation~\eqref{eqn:ddd} as
$$
y^{r_1}\left(\frac{x}{y}\right)^{r_1} 
\prod_{i=1}^{r_1}(\frac{x}{y}-c_i)^{-1}=
\frac{p}{q}
\left(\frac{p}{q}-d_1\right)^{-1}
q,
$$
which is the same as \eqref{eqn:rrr} with $s=1$ and $r=r_1$. 
We have 
\begin{align*}
Sq^s & 
=(q^{-1}-d_1p^{-1})^{-s}\\
& =
\left(\frac{x^{r_1}y^{r_1}}{q_1(0)x^{r_1} -
d_1p_2(0)y^{r_1}}\right)^s.
\end{align*}

\item[(2)]
$m_0=0$, $m_1=s$, and 
$$
p(x) = \frac{p_{1,1}x^{r_1} +p_1(0)}{x^{r_1}},\quad  q(y)=\frac{q_{1,1}y^{r_1} +q_2(0)}{y^{r_1}},
$$
with $p_{1,1}-d_1q_{1,1}=0$,
$$
x^{n_0}=x^{-r_1s}, \quad y^{-r_1s}=y^{r-n_0-\sum_{i>0}n_i}
$$ 
$$
\prod_{i\ge 1} (x-c_iy)^{n_i}=(p_1(0)y^{r_1}-d_1q_2(0)x^{r_1})^s.
$$
We obtain $I=r_1$, $n_i=s$, for $i\ge 1$,  
$n_0=-r_1s=r$,  and
$$
c_i=\zeta_{r_1}^i d^{1/r_1},
$$
with $d=d_1q_2(0)/p_1(0)$.
We can rewrite equation~\eqref{eqn:ddd} as
$$
y^{-r_1}\left(\frac{x}{y}\right)^{-r_1} 
\prod_{i=1}^{r_1}(\frac{x}{y}-c_i)=
\left(\frac{p}{q}-d_1\right)q.
$$
We have
\begin{align*}
Sq^s & = 
(p-d_1q)^s\\
    &=
\left(\frac{p_1(0)y^{r_1} -
d_1q_2(0)x^{r_1}}{x^{r_1}y^{r_1}}\right)^s.
\end{align*}

\end{itemize}

This concludes the proof of Proposition~\ref{prop:funct-eq}.
\end{proof}

\begin{lemm}
\label{lemm:TFL}
Let $x_1, x_2\in K^*$ be algebraically independent elements and 
let $f_i\in \ovl{k(x_i)}$, $i=1,2$.
Assume that $f_1f_2\in \ovl{k(x_1x_2)}$. Then there exists an $a\in \Q$
such that $f_i(x_i)=x_i^a$, in $K^*/k^*$.
\end{lemm}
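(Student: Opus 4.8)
The plan is to treat the hypothesis as an algebraic form of the multiplicative Cauchy functional equation $f_1(x_1)f_2(x_2)=h(x_1x_2)$ and to linearize it by taking logarithmic derivatives. Write $u=x_1x_2$, $M=k(x_1,x_2)$, and let $h\in\ovl{k(u)}$ be the element with $f_1f_2=h$. Since we are in characteristic zero, the partial derivations $\partial_1=\partial/\partial x_1$ and $\partial_2=\partial/\partial x_2$ on $M$ extend uniquely to $\ovl M$. First I would record three facts about these extensions: (a) $\partial_2 f_1=0$ and $\partial_1 f_2=0$, obtained by differentiating the minimal polynomial of $f_i$ over $k(x_i)$ and using separability; (b) the chain rule $\partial_1 h = x_2\,h'$ and $\partial_2 h = x_1\,h'$, where $h'=dh/du\in\ovl{k(u)}$, obtained by comparing the derivations $\partial_1=x_2\,d/du$ and $\partial_2=x_1\,d/du$ on $k(u)$ and invoking uniqueness of extension to $\ovl{k(u)}$; and (c) that $\partial_i$ preserves $\ovl{k(x_i)}$ and $d/du$ preserves $\ovl{k(u)}$.

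Next, applying $\partial_1$ and $\partial_2$ to $f_1f_2=h$ and dividing by $h=f_1f_2$, fact (a) gives the pair of identities
\[
\frac{\partial_1 f_1}{f_1}=\frac{h'}{h}\,x_2,\qquad \frac{\partial_2 f_2}{f_2}=\frac{h'}{h}\,x_1 .
\]
Set $\phi_1:=\partial_1 f_1/f_1\in\ovl{k(x_1)}$ and $\phi_2:=\partial_2 f_2/f_2\in\ovl{k(x_2)}$. Eliminating the common factor $h'/h$ yields $x_1\phi_1(x_1)=x_2\phi_2(x_2)$, an equality whose left side lies in $\ovl{k(x_1)}$ and whose right side lies in $\ovl{k(x_2)}$. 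I would then prove the key field-theoretic input $\ovl{k(x_1)}\cap\ovl{k(x_2)}=k$: any element of the intersection is either in $k$ or transcendental over $k$, and in the latter case both $x_1$ and $x_2$ become algebraic over $k(\alpha)$, forcing $x_1,x_2$ to be algebraically dependent, contrary to hypothesis. Hence $x_1\phi_1(x_1)=x_2\phi_2(x_2)=a$ for some constant $a\in k$, so $\partial_1 f_1/f_1=a/x_1$.

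It remains to check $a\in\Q$ and to integrate. The relation $\partial_1 f_1/f_1=a/x_1$ says that on the smooth projective curve $C$ with function field $k(x_1,f_1)$ the logarithmic differential $df_1/f_1$ equals $a\,dx_1/x_1$. Comparing residues at a point $P$ of $C$ lying over $x_1=0$, the left side contributes $\ord_P(f_1)\in\Z$ while the right side contributes $a\cdot\ord_P(x_1)=a\,e_P$ with $e_P\ge 1$ the ramification index; hence $a=\ord_P(f_1)/e_P\in\Q$. With $a$ now rational, $x_1^{a}$ is a well-defined element of $\ovl{k(x_1)}$ with $\partial_1(x_1^a)/x_1^a=a/x_1$, so $\partial_1(f_1/x_1^{a})=0$ and therefore $f_1/x_1^{a}\in\ker(\partial_1|_{\ovl{k(x_1)}})=k$, i.e. $f_1=x_1^a$ in $K^*/k^*$; the symmetric argument gives $f_2=x_2^a$.

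I expect the main obstacle to be the careful handling of derivations in the algebraic closure — in particular fact (a) and the chain rule (b), and the uniqueness statements that guarantee $\phi_i$ is genuinely a function of $x_i$ alone — together with the clean proof that $\ovl{k(x_1)}\cap\ovl{k(x_2)}=k$. The functional-equation elimination and the residue computation establishing rationality of $a$ should then be routine.
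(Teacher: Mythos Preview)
Your proposal is correct and takes a genuinely different route from the paper's proof.

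The paper proceeds in two stages. First it treats the special case $f_i\in k(x_i)$ by writing each $f_i$ as a product of linear factors and observing that the factorization of $f_1f_2$ into irreducibles in $k[x_1,x_2]$ forces all nonzero roots to vanish. Then, for $f_i\in\ovl{k(x_i)}$, it uses the Galois action of $\Gamma=\Gamma_1\times\Gamma_2=\Gal(\ovl{k(x_1)}\,\ovl{k(x_2)}/k(x_1,x_2))$: an element $\gamma_1\in\Gamma_1$ fixes $f_1$ and sends $f_3=f_1f_2$ into $\ovl{k(x_1x_2)}$, so the ratio $f_2/\gamma_1(f_2)=f_3/\gamma_1(f_3)$ lies in $\ovl{k(x_2)}\cap\ovl{k(x_1x_2)}=k$, hence $\gamma_1$ acts on each $f_i$ by a root of unity and some power $f_i^m$ lies in $k(x_i)$, reducing to the rational case.

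Your argument bypasses both stages at once. By passing to logarithmic derivatives you convert the multiplicative functional equation into an additive one, and the separation-of-variables step $x_1\phi_1(x_1)=x_2\phi_2(x_2)\in\ovl{k(x_1)}\cap\ovl{k(x_2)}=k$ plays the same role that the Galois ratio $f_2/\gamma_1(f_2)\in k$ plays in the paper. The residue computation then gives $a\in\Q$ directly, and integration (via $\ker(\partial_1|_{\ovl{k(x_1)}})=k$) finishes. All of your claimed facts (a)--(c) about unique extension of derivations in characteristic zero are standard, and your proof that $\ovl{k(x_1)}\cap\ovl{k(x_2)}=k$ via transcendence degree is exactly the right ingredient.

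What each approach buys: the paper's Galois argument yields the auxiliary fact that some power $f_i^m$ is already rational in $x_i$, which is reused later in the paper (Lemmas~\ref{lemm:p}--\ref{lemm:S}); your differential method is shorter and more uniform but leans essentially on characteristic zero for the unique extension of derivations and for the residue identity $\res_P(df/f)=\ord_P(f)$. One small point worth making explicit in a write-up: $x_1^a$ for $a\in\Q$ is only defined up to a root of unity, but since the conclusion is stated in $K^*/k^*$ this ambiguity is harmless, and the identity $\partial_1(x_1^a)/x_1^a=a/x_1$ holds for any choice of root.
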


\begin{proof}
Assume first that $f_i\in k(x_i)$ and write
$$
f_i(x_i)=\prod_{j} (x_i-c_{ij})^{n_{ij}}.
$$
By assumption, 
$$
\prod_{i,j} (x_i-c_{ij})^{n_{ij}} = \prod_{r} (x_1x_2-d_r)^{m_r}.
$$
However, the factors are coprime, unless $c_{ij}=0, d_r=0$, for all $i,j,r$.

Now we consider the general case: $f_i\in \ovl{k(x_i)}$.
We have a diagram of field extensions

\

\centerline{
\xymatrix{
k(x_1x_2) \ar@{-}[r] & \ovl{k(x_1x_2)} \ar@{-}[r] & \ovl{k(x_1,x_2)}   \ar@{-}[d]   \\       
              & \ovl{k(x_1)}\, k(x_2) \ar@{-}[d] \ar@{-}[r]  & \ovl{k(x_1)} \,\,\ovl{k(x_2)} \ar@{-}[d] \\
              & k(x_1,x_2)         \ar@{-}[r]   & k(x_1)\,\ovl{k(x_2)}
}
}

\

The Galois group $\Gal(\,\ovl{k(x_1,x_2)}/k(x_1,x_2))$ preserves $\ovl{k(x_1x_2)}$. 
We have
$$
\Gamma:= \Gal(\,\ovl{k(x_1)}\,\ovl{k(x_2)}/k(x_1,x_2))=\Gamma_1\times \Gamma_2,
$$ 
with $\Gamma_i$ 
acting trivially on $\ovl{k(x_i)}$. 
Put $f_3:=f_1f_2$ and consider the action of $\gamma_1:=(\gamma_1,1)\in \Gamma$ on 
$$
(f_1,f_2,f_3)\mapsto (f_1, \gamma_1(f_2), \gamma_1(f_3)).
$$
It follows that
$$
f_1\gamma_1(f_2) = \gamma_1(f_3),
$$ 
and 
$$
\ovl{k(x_1)}\ni f_2/\gamma_1(f_2)=f_3/\gamma_1(f_3) \in \ovl{k(x_3)}.
$$
Hence each side is in $k$. 
The action of $\gamma_1$ has finite orbit, so that $\gamma_1(f_3)=\zeta_n f_3$
and $\gamma_1(f_2)=\zeta_n' f_2$
for some $n$-th roots of 1. 
Note that $\Gamma$ acts on $f_1,f_2$, and $f_3$ through 
a finite quotient. It follows that for some $m\in \N$, we have 
$f_i^m\in k(x_i)$, for $i=1,2,3$, and we can apply the argument above. 
\end{proof}

\

Let $x,y\in K^*$ 
be algebraically independent over $k$. 
We want to determine the set of 
solutions of the equation
\begin{equation}
\label{eqn:ry}
Ry=Sq,
\end{equation} 
where 
$$
R\in \ovl{k(x/y)},\quad 
q\in \ovl{k(y)},  \quad
p\in \ovl{k(x)},  \quad 
S\in \ovl{k(p/q)}.
$$ 
We assume that
$x,p,y,q$ are  
multiplicatively independent in $K^*/k^*$
and that $S$ and $R$ are nonconstant.  
We will reduce the problem to the one solved in 
Proposition~\ref{prop:funct-eq}.

\begin{lemm} 
\label{lemm:p}
There exists an $n(p)\in \N$ such that
$p^{n(p)}\in \ovl{k(x/y)}\, \ovl{k(y)}$.
\end{lemm}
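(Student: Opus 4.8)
The plan is to use the functional equation \eqref{eqn:ry} to place $S$ inside the compositum $F:=\ovl{k(x/y)}\,\ovl{k(y)}$, and then to extract a Galois-theoretic rigidity statement for $p$, which turns out to be algebraic over $F$. First I would rewrite \eqref{eqn:ry} as $S=Ry/q$. Since $R\in\ovl{k(x/y)}$, $y\in\ovl{k(y)}$ and $q\in\ovl{k(y)}$, all three factors lie in $F$, so $S\in F$. As $S$ is nonconstant, $k(S)$ has transcendence degree $1$; combined with $S\in\ovl{k(p/q)}$ and $\trdeg\,\ovl{k(p/q)}=1$ this forces $\ovl{k(S)}=\ovl{k(p/q)}$. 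In particular $p/q$ is algebraic over $k(S)\subseteq F$, whence $p=(p/q)\cdot q$ is algebraic over $F$ as well.

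Next I would pass to $\Omega:=\ovl{k(x,y)}=\ovl{k(x/y,y)}$, the algebraic closure of $k(x/y,y)\subseteq F$. Then $\Omega/F$ is an algebraic Galois extension (characteristic zero) with fixed field $F$, and $p\in\ovl{k(x)}\subseteq\Omega$. The task reduces to showing that $p^{n}$ is fixed by $\Gal(\Omega/F)$ for some $n$. Fix $\sigma\in\Gal(\Omega/F)$. Because $\sigma$ fixes $x/y$ and $y$, it fixes $x$ and preserves $\ovl{k(x)}$, so $\sigma(p)/p\in\ovl{k(x)}$. Because $\sigma$ fixes $R$, $y$ and $q$, it fixes $S$, hence preserves $\ovl{k(S)}=\ovl{k(p/q)}$; since moreover $\sigma(q)=q$, we get $\sigma(p)/p=\sigma(p/q)/(p/q)\in\ovl{k(p/q)}$. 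Therefore $\sigma(p)/p\in\ovl{k(x)}\cap\ovl{k(p/q)}$.

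The key point, which I expect to carry the real content of the lemma, is that this intersection is trivial, i.e. equal to $k$. This is where the multiplicative independence hypothesis enters. If $p/q\in\ovl{k(x)}$, then, since $p\in\ovl{k(x)}$, we would also have $q\in\ovl{k(x)}\cap\ovl{k(y)}=k$, making $q$ constant and contradicting the multiplicative independence of $x,p,y,q$ in $K^*/k^*$. Hence $p/q$ is transcendental over $k(x)$, so $x$ and $p/q$ are algebraically independent and $\ovl{k(x)}\cap\ovl{k(p/q)}=k$. Consequently $\sigma(p)/p\in k^*$ for every $\sigma\in\Gal(\Omega/F)$.

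Finally, the map $\chi\colon\sigma\mapsto\sigma(p)/p$ is a homomorphism $\Gal(\Omega/F)\to k^*$: for $\sigma,\tau$ one computes $\chi(\sigma\tau)=\sigma(\tau(p))/p=\chi(\tau)\,\chi(\sigma)$, using that $\chi(\tau)\in k$ is fixed by $\sigma$. Since $p$ has only finitely many $F$-conjugates (it is algebraic over $F$), $\chi$ has finite image, a finite subgroup of $k^*$, hence consists of roots of unity of some bounded order $n$. Then $\sigma(p^{n})=\chi(\sigma)^{n}p^{n}=p^{n}$ for all $\sigma$, so $p^{n}\in F=\ovl{k(x/y)}\,\ovl{k(y)}$, and we may take $n(p)=n$.
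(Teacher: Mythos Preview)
Your argument is correct and follows essentially the same route as the paper: place $S$ in $F=\ovl{k(x/y)}\,\ovl{k(y)}$, use the Galois group $\Gal(\ovl{k(x,y)}/F)$, show each automorphism acts on $p$ by multiplication by an element of $\ovl{k(x)}\cap\ovl{k(p/q)}=k$, hence by a root of unity, and conclude that a power of $p$ is fixed. Your write-up is somewhat more explicit than the paper's---you spell out why $\ovl{k(S)}=\ovl{k(p/q)}$, why the intersection $\ovl{k(x)}\cap\ovl{k(p/q)}$ is trivial, and you package the last step as a character with finite image---but the substance is identical.
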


\begin{proof}
The function $S\in \ovl{k(p/q)}\cap \ovl{k(x/y)}\,\ovl{k(y)}$ 
is nonconstant. The Galois group 
$$
\Gamma:=\Gal(\ovl{k(x,y)}/\ovl{k(x/y)}\,\ovl{k(y)})
$$
acts trivially on $q\in \ovl{k(y)}$ and $S$.
Thus $\ovl{k(p/q)}=\ovl{k(\gamma(p)/q)}$.  
Assume that $\gamma \in \Gamma$ acts nontrivially on $p\in \ovl{k(x)}$. 
It follows that 
$$
\gamma(p)/p \in \ovl{k(p/q)}\cap \ovl{k(x)}=k,
$$
by assumption on these 1-dimensional fields.
Thus $\gamma(p)=\zeta p$, where $\zeta$ is a root of 1.  
Since $\Gamma$ acts on $p$ via a finite quotient
and since each $\gamma\in \Gamma$ acts by multiplication
by a root of 1, $p^{n(p)} \in \ovl{k(x/y)}\, \ovl{k(y)}$, for some $n(p)\in \N$. 
\end{proof}

\begin{lemm}
\label{lemm:pp}
There exists an $N=N(p)\in \N$ such that 
$$
p^{n(p)} \in k(x^{1/N}).
$$
\end{lemm}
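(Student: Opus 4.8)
The plan is to set $w:=p^{n(p)}$ and exploit that, by Lemma~\ref{lemm:p}, $w$ lies in the intersection $\ovl{k(x)}\cap \ovl{k(x/y)}\,\ovl{k(y)}$; the claim amounts to showing that this intersection is contained in $\bigcup_N k(x^{1/N})$. I would pass to the coordinates $u:=x/y$ and $v:=y$, so that $k(x,y)=k(u,v)$ and $\ovl{k(x/y)}\,\ovl{k(y)}=\ovl{k(u)}\,\ovl{k(v)}$. The key observation is that, since $x=uv$, the field $k(u,v)$ is a \emph{purely transcendental} extension of $k(x)$ of degree one, namely $k(u,v)=k(x)(u)$. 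Hence the field $k(u,v)(w)$ is obtained both by adjoining $w$ to $k(x)$ and then base-changing along $k(x)\subset k(u,v)$, and ramification of $k(x)(w)/k(x)$ over a point of $\P^1_x$ is unaffected by this base change.

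First I would bound the ramification of $k(u,v)(w)/k(u,v)$. Because $w\in \ovl{k(u)}\,\ovl{k(v)}$, it lies in a compositum $F_1F_2$ of finite subextensions $F_1/k(u)$ and $F_2/k(v)$. Now $F_1\cdot k(u,v)$ is ramified over $k(u,v)$ only along the vertical divisors $\{u=c_i\}$ (the branch points of $F_1/k(u)$, adjoining the transcendental $v$), and $F_2\cdot k(u,v)$ only along the horizontal divisors $\{v=d_j\}$; so their compositum, and hence the subextension $k(u,v)(w)/k(u,v)$, is unramified away from a finite union of lines $\{u=c_i\}\cup\{v=d_j\}$ in $\P^1\times\P^1$.

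Next I would translate this into ramification over the $x$-line. Under $x=uv$, a point $a\in k^*$ corresponds to the divisor $\{uv=a\}$, which for $a\neq 0,\infty$ is an irreducible curve distinct from every branch line $\{u=c_i\}$ and $\{v=d_j\}$. By the invariance of ramification under the purely transcendental base change above, $E:=k(x)(w)/k(x)$ is ramified over $a$ if and only if $k(u,v)(w)/k(u,v)$ is ramified along $\{uv=a\}$, which the previous step excludes. Thus $E/k(x)$ is a finite extension unramified outside $\{0,\infty\}$, and I would finish by the classification of such covers: writing $E=k(C)$ with $C\ra \P^1_x$ of degree $N$, Riemann--Hurwitz forces $g(C)=0$ with total ramification over each of $0,\infty$, so $C\cong \P^1$ and the map is $x\mapsto x^N$; hence $E=k(x^{1/N})$ and $p^{n(p)}=w\in k(x^{1/N})$ with $N=N(p):=[E:k(x)]$.

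I expect the main obstacle to be the careful bookkeeping in the ramification comparison: verifying that the change of variables $x=uv$ identifies the place ``$x=a$'' of $k(x)$ with the generic point of the hyperbola $\{uv=a\}$, and that ramification indices are genuinely preserved under the transcendental base change rather than created or destroyed at infinity. Establishing that the branch locus of $k(u,v)(w)/k(u,v)$ consists only of the coordinate lines $\{u=c_i\}\cup\{v=d_j\}$ — which is exactly what rules out branching over any $a\in k^*$ — is the conceptual heart of the argument.
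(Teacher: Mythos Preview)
Your argument is correct, and it reaches the same conclusion as the paper by a genuinely different route. The paper's proof is Galois-theoretic: it observes that $\Gamma_{x/y}\times\Gamma_y$ acts on the intersection $\ovl{k(x)}\cap\ovl{k(x/y)}\,\ovl{k(y)}$, and uses the involution $y\leftrightarrow x/y$ to argue that the image of this product in $\Gamma_x=\Gal(\ovl{k(x)}/k(x))$ is \emph{abelian}; it then invokes Kummer theory (``every abelian extension of $k(x)$ is described by the ramification divisor'') together with the observation that the only divisors common to $\ovl{k(x)}$, $\ovl{k(y)}$, $\ovl{k(x/y)}$ are $x=0,\infty$. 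You instead work geometrically: you bound the branch locus of $k(u,v)(w)/k(u,v)$ by horizontal and vertical lines in $\P^1_u\times\P^1_v$, push this down along the purely transcendental base change $k(x)\hookrightarrow k(x)(u)=k(u,v)$ to see that $k(x)(w)/k(x)$ is unramified outside $\{0,\infty\}$, and then classify such covers directly via Riemann--Hurwitz. Your route bypasses the abelianness step entirely --- which is legitimate, since $\pi_1^{\mathrm{\acute{e}t}}(\mathbb{G}_m)\cong\hat{\Z}$ already forces every finite cover of $\P^1\setminus\{0,\infty\}$ to be cyclic --- and is arguably more transparent than the paper's somewhat elliptic involution argument. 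The paper's approach, on the other hand, isolates the structural fact that the relevant Galois image is abelian, which is of independent interest and closer in spirit to the Kummer-theoretic framework used elsewhere in the paper. The ``obstacle'' you flag --- that branch loci behave well under the transcendental base change $k(x)\subset k(x)(u)$, so that the place $x=a$ really does correspond to the irreducible divisor $\{uv=a\}$ with no spurious ramification created --- is exactly the point that needs care, and your identification of it is accurate.
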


\begin{proof} 
The intersection $\ovl{k(x)}\cap \ovl{k(x/y)} \, \ovl{k(y)}$ 
is preserved by action of $\Gamma=\Gamma_{x/y} \times \Gamma_y$. 
Its elements are fixed 
by any lift of
$$
\sigma \,: y \mapsto x/y.
$$ 
to the Galois group $\Gamma$.
All such lifts are obtained by conjugation in 
$\Gamma_{x/y}\times \Gamma_y$.
Hence $(1,\gamma)$ acts as $(\sigma(\gamma),1)$. 
The group homomorphism 
$$
\Gamma_{x/y} \times \Gamma_y \ra \Gamma_x:=\Gal(\ovl{k(x)}/k(x))
$$
has abelian image since $(\gamma_1,1)$ and $(1,\gamma_2)$ 
commute and generate $\Gamma$. 
Every abelian extension of $k(x)$ is described by 
the ramification divisor. 
It remains to observe that 
the only common irreducible divisors of $\ovl{k(y)}$, 
$\ovl{k(x/y)}$ and $\ovl{k(x)}$
are $x=0$ or $x=\infty$.
\end{proof}

\begin{lemm} 
\label{lemm:S}
There exists an $n\in \N$ such that 
$$
S^{n}\in k(x^{1/N},y) \quad \text{ and } \quad q^{n}\in k(y).
$$ 
\end{lemm}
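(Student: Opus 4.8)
The plan is to complete the reduction of the functional equation $Ry=Sq$ to the rational setting of Proposition~\ref{prop:funct-eq}. Having already forced $p$ to be, up to an $n(p)$-th power, a rational function of $x^{1/N}$ (Lemma~\ref{lemm:pp}), I want to show that a power of $q$ is an \emph{honest} rational function of $y$, and that a power of $S$ is a rational function of $x^{1/N}$ and $y$. Throughout I will use the two features of the set-up already exploited in Lemmas~\ref{lemm:p} and~\ref{lemm:pp}: that $x,p,y,q$ are multiplicatively independent, so that distinct one-dimensional subfields among $\ovl{k(x)},\ovl{k(y)},\ovl{k(x/y)},\ovl{k(p/q)}$ meet only in $k$; and the relation $S=Ry/q\in\ovl{k(x/y)}\,\ovl{k(y)}$, which together with $p^{n(p)}\in k(x^{1/N})\subset\ovl{k(x/y)}\,\ovl{k(y)}$ places both $S$ and $p^{n(p)}$ in the compositum on which $\Gamma_{x/y}\times\Gamma_y$ acts.

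First I would treat $q$. Fix $\tau\in\Gamma_y=\Gal(\ovl{k(y)}/k(y))$ and let $\sigma=(1,\tau)\in\Gamma_{x/y}\times\Gamma_y$ act on $\ovl{k(x/y)}\,\ovl{k(y)}$, extended arbitrarily to $\ovl{k(x,y)}$. Since $\sigma$ fixes $x/y$ and $y$ it fixes $x$, hence fixes $R\in\ovl{k(x/y)}$, while $\sigma(q)=\tau(q)$. Applying $\sigma$ to $Ry=Sq$ and dividing by the original equation yields the key identity $\tau(q)/q=S/\sigma(S)$, an element of $\ovl{k(y)}$. The goal is to show this ratio lies in $k$ for every $\tau$; granting this, the $\Gamma_y$-orbit of $q$ consists of constant multiples of $q$ and is finite, so $q^n\in k(y)$ for some $n\in\N$. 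Because $\tau$ ranges over the \emph{full} group $\Gamma_y$, the conclusion is $q^n\in k(y)$ directly, with no fractional power of $y$; this is exactly where the situation differs from Lemma~\ref{lemm:pp}, in which only an abelian quotient of the Galois group was controlled and the root $x^{1/N}$ was unavoidable.

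The analysis of $S$ then repeats the pattern of Lemmas~\ref{lemm:p} and~\ref{lemm:pp} one more time, now over the one-dimensional field $k(p/q)$: from $p^{n(p)}\in k(x^{1/N})$ and $q^n\in k(y)$ a suitable power of $p/q$ lies in $k(x^{1/N},y)$, and the Galois-descent and ramification argument applied to the nonconstant $S\in\ovl{k(p/q)}$ gives $S^n\in k(x^{1/N},y)$ after enlarging $n$. Combined with Lemma~\ref{lemm:pp}, this rewrites $R^ny^n=S^nq^n$ entirely in terms of rational functions of the two variables $x^{1/N}$ and $y$, which is the hypothesis of Proposition~\ref{prop:funct-eq}.

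The hard part is the claim $\tau(q)/q=S/\sigma(S)\in k$. The naive route — that $\sigma(S)$ again lies in $\ovl{k(p/q)}$, so that the ratio sits in $\ovl{k(p/q)}\cap\ovl{k(y)}=k$ — fails, because $\sigma$ twists the numerator: $\sigma(S)\in\ovl{k(\sigma(p)/\tau(q))}$, and since $\sigma$ acts on $p^{n(p)}\in k(x^{1/N})$ by $x^{1/N}\mapsto\zeta_\tau x^{1/N}$ with $\zeta_\tau$ a root of unity, $\sigma(p)$ is \emph{not} a constant multiple of $p$. I therefore expect the main work to lie in controlling $\sigma(S)$ precisely: one must use the explicit Kummer shape $p^{n(p)}\in k(x^{1/N})$, together with the multiplicative independence of $x,p,y,q$, rather than a bare intersection-of-fields statement, to show that the twist of $S$ by $\sigma$ alters it only by a constant, and hence that $\tau(q)/q\in k$.
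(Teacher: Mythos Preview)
You correctly isolate the identity $\tau(q)/q=S/\sigma(S)$ and correctly diagnose the obstacle: an extension of $(1,\tau)\in\Gamma_{x/y}\times\Gamma_y$ sends $x^{1/N}$ to $\zeta_\tau\, x^{1/N}$ (with $\zeta_\tau=\tau(y^{1/N})/y^{1/N}$, via $x^{1/N}=(x/y)^{1/N}y^{1/N}$), and for generic $\tau$ this scrambles $p^{n(p)}\in k(x^{1/N})$, so $\sigma(S)\notin\ovl{k(p/\tau(q))}$. But you then stop, saying only that you ``expect the main work to lie in controlling $\sigma(S)$''. That is precisely the content of the lemma, and you have not supplied it.

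The missing idea is simpler than what you propose: one does not try to control an arbitrary $\tau$, but restricts to automorphisms for which $\zeta_\tau=1$. The paper does this by switching the parametrisation to $\gamma=(\gamma'_1,1)\in\Gamma_x\times\Gamma_{x/y}$ with $\gamma'_1$ in the subgroup $\Gamma'_x$ fixing $k(x^{1/N})$. Such $\gamma$ still fix $R$ and $y$, so $S/\gamma(S)=\gamma(q)/q$ holds as before; but now $p^{n(p)}\in k(x^{1/N})$ is fixed, hence $\gamma(p)=\zeta p$ with $\zeta$ a root of unity, and therefore $\gamma(S)\in\ovl{k(\gamma(p)/\gamma(q))}=\ovl{k(p/\gamma(q))}$. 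With $p$ held projectively fixed, Lemma~\ref{lemm:TFL} applies to $S\in\ovl{k(p/q)}$, $\gamma(S)^{-1}\in\ovl{k(p/\gamma(q))}$ and their product $\gamma(q)/q\in\ovl{k(y)}$, giving a dichotomy: either $p/q$ and $p/\gamma(q)$ are algebraically independent, which forces $S=p/q$ and then $Ry=p$, so that a second application of Lemma~\ref{lemm:TFL} yields $p=x$, contradicting multiplicative independence; or $\ovl{k(p/q)}=\ovl{k(p/\gamma(q))}$, whence $S/\gamma(S)\in\ovl{k(p/q)}\cap\ovl{k(y)}=k$. This single argument gives $\gamma(S)/S\in k$ and $\gamma(q)/q\in k$ simultaneously, so both conclusions of the lemma drop out at once; your proposed separate pass ``over $k(p/q)$'' for $S$ is not needed.
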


\begin{proof} 
Let
$$
\Gamma'_x \subset \Gamma_x=\Gal(\ovl{k(x)}/k(x^{1/N}))
$$ 
be the subgroup of elements acting trivially
on $k(x^{1/N})$. 
Let 
$$
\gamma= (\gamma_1',1)\in \Gamma_x\times \Gamma_{x/y}, \quad 
\gamma_1' \in \Gamma_x'.
$$
Then 
$$
Ry = Sq=\gamma(S)\gamma(q)\quad \text{ and }\quad  
S/\gamma(S) = \gamma(q)/q.
$$
We also have 
$$
\frac{p/\gamma(q)}{p/q} = q/\gamma(q)
$$ 
with
$$
S \in \ovl{k(p/q)}, \quad 
p/\gamma(q), \,\, \gamma(S) \in \ovl{k(p/\gamma(q))}, \quad 
q/\gamma(q) \in \ovl{k(y)}.
$$
By Lemma~\ref{lemm:TFL},
if we had $\ovl{k(p/q)}\cap \ovl{k(p/\gamma(q))} = k$
then $S=p/q$. However, equation
$Ry=p$ 
and Lemma~\ref{lemm:TFL} imply that $R=x/y$,
contradicting the assumption that $x$ and $p$ are multiplicatively independent. 
Thus we have 
$\ovl{k(p/q)}=\ovl{k(p/\gamma(q))}$.
The equality 
$S/\gamma(S) = (q/\gamma(q))^{-1}$ implies that
both sides are constant.
Hence there exists an $n\in \N$ such that  
$S^{n} \in k(x^{1/N},y)$, and $q^{n}\in k(y)$.
\end{proof}

\begin{lemm}
There exists an $n(R)$ such that 
$R^{n(R)}\in k(\sqrt[N]{x/y})$. 
\end{lemm}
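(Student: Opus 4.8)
The plan is to avoid any fresh Galois-theoretic analysis and instead feed the output of Lemma~\ref{lemm:S} directly into the defining equation~\eqref{eqn:ry}. Writing $R = Sq/y$, I would first invoke Lemma~\ref{lemm:S} to fix a single exponent $n\in\N$ with $S^n\in k(x^{1/N},y)$ and $q^n\in k(y)$. Raising $R=Sq/y$ to the $n$-th power then gives
$$
R^n=\frac{S^n q^n}{y^n}\in k(x^{1/N},y),
$$
since every factor on the right lies in $k(x^{1/N},y)$. On the other hand $R\in\overline{k(x/y)}$ by hypothesis, so $R^n$ is algebraic over $k(x/y)$. Thus the problem reduces to locating $R^n$ inside the intersection $\overline{k(x/y)}\cap k(x^{1/N},y)$.

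The decisive step is then purely field-theoretic. I would observe that
$$
k(x^{1/N},y)=k(x/y)\bigl(x^{1/N}\bigr),
$$
because $x=(x^{1/N})^N$ and $y=(x^{1/N})^N/(x/y)$ show each side contains the generators of the other. Moreover $x^{1/N}$ is transcendental over $k(x/y)$: were it algebraic, then $x$, and hence $y=x\cdot(x/y)^{-1}$, would be algebraic over $k(x/y)$, forcing $\trdeg_k k(x,y)=1$ and contradicting the algebraic independence of $x$ and $y$. Hence $k(x^{1/N},y)$ is a rational function field in the single transcendental $x^{1/N}$ over $k(x/y)$, and the relative algebraic closure of $k(x/y)$ inside it is $k(x/y)$ itself, because a nonconstant rational function of a transcendental is transcendental over the base. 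Since $R^n$ is algebraic over $k(x/y)$ and lies in this field, I conclude $R^n\in k(x/y)\subseteq k(\sqrt[N]{x/y})$, which proves the lemma with $n(R)=n$, and in fact with the sharper conclusion $R^{n(R)}\in k(x/y)$.

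I do not expect a serious obstacle: the only points requiring care are the bookkeeping of a single exponent working simultaneously for $S$ and $q$ (already guaranteed by Lemma~\ref{lemm:S}) and the relative-algebraic-closure fact just cited. Should one prefer to mirror the proof of Lemma~\ref{lemm:pp} and obtain only the stated (weaker) conclusion, an alternative route is available: one analyzes the ramification of $R$ over $k(x/y)$, argues as in Lemma~\ref{lemm:pp} that the relevant extension is abelian and can ramify only over $x/y=0$ and $x/y=\infty$, and deduces that it is Kummer, so that $R^{n(R)}\in k(\sqrt[N]{x/y})$. Since the direct argument above is both cleaner and strictly stronger, I would adopt it.
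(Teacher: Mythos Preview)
Your argument is correct and in fact yields the sharper conclusion $R^{n}\in k(x/y)$, not merely $R^{n}\in k(\sqrt[N]{x/y})$. The route, however, is genuinely different from the paper's. The paper continues in the Galois-theoretic style of Lemmas~\ref{lemm:p}, \ref{lemm:pp}, and~\ref{lemm:S}: having placed $R^{n}$ in $\overline{k(x/y)}\cap k(x^{1/N},y)$, it lets the Galois group $\Gal(\overline{k(x^{1/N},y)}/k(x^{1/N},y))$ act and observes that each $\gamma$ changes $R$ only by a root of unity, then concludes by the same finite-orbit reasoning used earlier. Your approach bypasses this entirely by recognising $k(x^{1/N},y)=k(x/y)(x^{1/N})$ as a purely transcendental extension of $k(x/y)$, so that $k(x/y)$ is already relatively algebraically closed in it; this is more elementary, avoids any ramification or abelian-extension analysis, and gives the stronger statement. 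The paper's approach has the virtue of stylistic uniformity with the surrounding lemmas and of making explicit why the bound $\sqrt[N]{x/y}$ appears, which is what is actually used after the change of coordinates $\tilde{x}=x^{1/N}$, $\tilde{y}=y^{1/N}$; your approach buys a cleaner and self-contained proof.
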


\begin{proof}
We have that 
$$
R^{n}y^{n} =S^{n} q^{n}
$$
with $q^{n}\in k(y)$ and $S^{n}\in k(x^{1/N},y)$.
Thus 
$$
R^{n}\in \ovl{k(x/y)}\cap k(x^{1/N}) k(y).
$$

Applying a nontrivial element $\gamma\in \Gal(\ovl{k(x^{1/N},y)}/k(x^{1/N},y))$
we find that $R^{n}/\gamma(R^{n})\in k^*$, and is thus a root of 1. 
As in the proofs above, we find that 
there is a multiple $n(R)$ of $n$ such that 
$R^{n(R)}\in k(\!\sqrt[N]{x/y}\,)$. 
\end{proof}

We change the coordinates
$$
\tilde{x}:=x^{1/N}, \quad \tilde{y}:=y^{1/N}.
$$

\begin{lemm}
\label{lemm:closure}
There exist 
$$
\tilde{p}\in k(\tilde{x}), \tilde{q}\in k(\tilde{y})
$$
such that 
\begin{equation}
\label{eqn:F}
F:=\ovl{k(p/q)}\cap k(\tilde{x},\tilde{y}) = k(\tilde{p}/\tilde{q}).
\end{equation}
\end{lemm}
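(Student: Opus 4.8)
The plan is to reduce the assertion to Lemma~\ref{lemm:fg}, which already solves the functional equation for rational separated-variable functions; the only ingredient not yet available is the rationality of $F$.

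First I would assemble the preceding lemmas into a clean description of $F$. By Lemmas~\ref{lemm:pp} and~\ref{lemm:S}, raising to a common power $m\in\N$ of $n(p)$ and $n$ gives $p^{m}\in k(\tilde{x})$ and $q^{m}\in k(\tilde{y})$ (note $k(y)\subseteq k(\tilde{y})$, since $y=\tilde{y}^{N}$). Set
$$
u:=\left(\frac{p}{q}\right)^{m}=\frac{p^{m}}{q^{m}},
$$
a nonconstant element of $k(\tilde{x},\tilde{y})$ of separated-variable type, with numerator in $k(\tilde{x})$ and denominator in $k(\tilde{y})$. Since $p/q$ is a root of $T^{m}-u$, we have $k(p/q)\subseteq\ovl{k(u)}$ and hence $\ovl{k(p/q)}=\ovl{k(u)}$; consequently
$$
F=\ovl{k(u)}\cap k(\tilde{x},\tilde{y})
$$
is the relative algebraic closure of $k(u)$ in $k(\tilde{x},\tilde{y})$. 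As $u\in F$ is nonconstant while $F\subseteq\ovl{k(p/q)}$ has transcendence degree one over $k$, we get $\trdeg_{k}F=1$.

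The key step is to show that $F$ is rational, i.e. $F=k(z)$ for a nonconstant $z\in k(\tilde{x},\tilde{y})$. Geometrically, $F$ is the function field of a curve $C$ admitting a dominant rational map from $\P^{1}\times\P^{1}$, the surface with coordinates $\tilde{x},\tilde{y}$. Restricting this map to a general ruling $\P^{1}\times\{b\}$, its image is either a point or all of $C$: in the first case the map factors through the second projection and $C$ is dominated by $\{b_{0}\}\times\P^{1}$, while in the second $C$ is dominated by $\P^{1}\times\{b\}$. Either way $C$ is unirational, hence rational by L\"uroth's theorem (we are in characteristic zero), so $F=k(z)$. This rationality is the only genuine obstacle; everything else is formal.

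Finally I would invoke Lemma~\ref{lemm:fg}. Writing $u=f(\tilde{x})/g(\tilde{y})$ with $f,g\in k(t)^{*}$ the nonconstant rational functions determined by $p^{m}=f(\tilde{x})$ and $q^{m}=g(\tilde{y})$, the containment $u\in F=k(z)$ is precisely the hypothesis of Lemma~\ref{lemm:fg}, applied to the algebraically independent pair $\tilde{x},\tilde{y}$. It produces $\tilde{f},\tilde{g}\in k(t)^{*}$ with $k(z)=k(\tilde{f}(\tilde{x})/\tilde{g}(\tilde{y}))$. Setting $\tilde{p}:=\tilde{f}(\tilde{x})\in k(\tilde{x})$ and $\tilde{q}:=\tilde{g}(\tilde{y})\in k(\tilde{y})$ then yields $F=k(\tilde{p}/\tilde{q})$, as required.
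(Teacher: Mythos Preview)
Your proof is correct and follows the same route as the paper: show that $F$ is a rational one-variable subfield of $k(\tilde x,\tilde y)$, exhibit a separated-variable element $f(\tilde x)/g(\tilde y)$ inside it, and invoke Lemma~\ref{lemm:fg}. The paper compresses your rationality argument into the single line ``every subfield of a rational field is rational,'' which in transcendence degree one is exactly the L\"uroth-via-unirationality step you spell out with the ruling of $\P^1\times\P^1$. On one point you are actually more careful than the paper: the paper writes ``since $p\in k(x),\,q\in k(y)$'' to place $p/q$ directly in $F$, but the standing hypotheses only give $p\in\ovl{k(x)}$, $q\in\ovl{k(y)}$; your passage to the power $u=(p/q)^m$ via Lemmas~\ref{lemm:pp} and~\ref{lemm:S} is the honest way to produce the separated-variable element needed for Lemma~\ref{lemm:fg}.
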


\begin{proof}
Every subfield of a rational field is rational. In particular, 
$F=k(\tilde{s})$ for some $\tilde{s}\in k(\tilde{x},\tilde{y})$. 
Since $p\in k(x),q\in k(y)$ they are both in $k(\tilde{x},\tilde{y})$
so that 
$p(x)/q(x)\in F=k(\tilde{s})$. By Lemma~\ref{lemm:fg}, 
$F=k(\tilde{p}/\tilde{q})$, as claimed.  
\end{proof}

\begin{coro}
\label{coro:sss}
There exists an $m\in \N$ such that
$$
S^m\in k(\tilde{p}/\tilde{q}),
$$
with 
$$
\tilde{p}\in k(\tilde{x}) \quad \text{ and }\quad \tilde{q}\in k(\tilde{y}).
$$
Moreover, $\tilde{q}=q^r$, for some $r\in \Q$. 
\end{coro}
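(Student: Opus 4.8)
The plan is to read both assertions off the structure statement of Lemma~\ref{lemm:closure}, using the power bounds of Lemmas~\ref{lemm:pp} and~\ref{lemm:S}, and to reduce the only nonformal point to a divisor computation on $\P^1\times\P^1$.

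The containment is immediate. By Lemma~\ref{lemm:S} there is an $n\in\N$ with $S^n\in k(x^{1/N},y)$; since $\tilde{x}=x^{1/N}$ and $y=\tilde{y}^N$, this lies in $k(\tilde{x},\tilde{y})$. As $S\in\ovl{k(p/q)}$, we get $S^n\in\ovl{k(p/q)}\cap k(\tilde{x},\tilde{y})=F=k(\tilde{p}/\tilde{q})$ by Lemma~\ref{lemm:closure}, so $m=n$ works.

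For the \emph{moreover}, choose $M$ to be a common multiple of $n(p)$ and $n$, so that $p^M\in k(\tilde{x})$ and $q^M\in k(\tilde{y})$ by Lemmas~\ref{lemm:pp} and~\ref{lemm:S}. Then $v:=p^M/q^M=(p/q)^M$ lies in $\ovl{k(p/q)}\cap k(\tilde{x},\tilde{y})=k(\tilde{p}/\tilde{q})$, so $v=\Phi(u)$ for some $\Phi\in k(t)$, where $u:=\tilde{p}/\tilde{q}$. The crux is to show $\Phi$ is a monomial, and I would do this by comparing divisors on $\P^1\times\P^1$ in coordinates $\tilde{x},\tilde{y}$. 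The divisor $\dv(v)$ of $v=p^M(\tilde{x})/q^M(\tilde{y})$ is supported on ``horizontal'' lines $\{\tilde{y}=\text{const}\}$ and ``vertical'' lines $\{\tilde{x}=\text{const}\}$. On the other hand $\dv(v)=u^{*}(\dv\Phi)=\sum_c(\ord_c\Phi)\{u=c\}$, and for $c\neq 0,\infty$ the fiber $\{u=c\}=\{\tilde{p}(\tilde{x})=c\,\tilde{q}(\tilde{y})\}$ has no horizontal or vertical component, since such a component would force $\tilde{p}$ or $\tilde{q}$ to be constant. Hence $\ord_c\Phi=0$ for all $c\neq 0,\infty$, so $\Phi(t)=c_0 t^{e}$ for some $e\in\Z$; here $e\neq 0$ because $v$ is nonconstant ($p,q$ being multiplicatively independent).

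It remains to separate variables. From $p^M/q^M=c_0(\tilde{p}/\tilde{q})^{e}$ we obtain $p^M/\tilde{p}^{\,e}=c_0\,q^M/\tilde{q}^{\,e}$, whose left side lies in $k(\tilde{x})$ and right side in $k(\tilde{y})$; both are therefore constant. In particular $q^M=c_1\tilde{q}^{\,e}$ in $K^{*}$, i.e.\ $\tilde{q}^{\,e}=q^M$ in $K^{*}/k^{*}$, which is exactly $\tilde{q}=q^{r}$ with $r=M/e\in\Q$. (This is what feeds into Proposition~\ref{prop:funct-eq}, since it lets one replace a power of $q$ by a power of $\tilde{q}$.) The main obstacle is the monomiality of $\Phi$; the remaining care is only in checking that the fibers $\{u=c\}$ ($c\neq 0,\infty$) have no horizontal or vertical components and that $v$ is nonconstant, both consequences of the multiplicative independence of $x,p,y,q$ and the nonconstancy of $p,q$.
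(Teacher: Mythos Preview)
Your argument is correct. The first containment $S^m\in k(\tilde p/\tilde q)$ is obtained exactly as the paper intends, by combining Lemma~\ref{lemm:S} with Lemma~\ref{lemm:closure}.

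For the ``moreover'' clause you take a different route from the paper. The paper simply invokes Lemma~\ref{lemm:TFL} with $x_1=p$, $x_2=1/q$, $f_1=\tilde p$, $f_2=1/\tilde q$: since $\tilde p\in\ovl{k(p)}$, $1/\tilde q\in\ovl{k(1/q)}$, and $\tilde p/\tilde q\in\ovl{k(p/q)}$ by~\eqref{eqn:F}, that lemma gives $\tilde p=p^{a}$ and $\tilde q=q^{a}$ in $K^*/k^*$ for some $a\in\Q$ in one stroke. You instead first pass to a high power $M$ to force $p^M,q^M$ into the rational field $k(\tilde x,\tilde y)$, and then run a divisor comparison on $\P^1\times\P^1$ to see that $\Phi$ must be a monomial, finally separating variables. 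This divisor argument is essentially the rational-function case that opens the proof of Lemma~\ref{lemm:TFL}, so you are re-deriving the needed special case rather than citing it. Both approaches are valid; the paper's is shorter because the work has already been packaged, while yours is more self-contained. One small point: your claim that the fiber $\{u=c\}$ for $c\neq 0,\infty$ has no horizontal or vertical component really uses that $\tilde p$ and $\tilde q$ are \emph{nonconstant}; this holds because otherwise $p/q\in F$ would lie in $\ovl{k(x)}$ or $\ovl{k(y)}$, forcing $q$ or $p$ into $k$, contrary to the standing multiplicative-independence assumption.
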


\begin{proof}
We apply Lemma~\ref{lemm:TFL}:
since 
$$
\tilde{p} \in k(\tilde{x})\subset \ovl{k(x)}=\ovl{k(p)}, 
\quad 1/\tilde{q}\in \ovl{k(y)}=\ovl{k(1/q)}
$$ 
and 
$$
\tilde{p}/\tilde{q}\in \ovl{k(p/q)},
$$
by \eqref{eqn:F},
$$
\ovl{k(\tilde{p}/\tilde{q})}=\ovl{k(S)}=\ovl{k(p/q)},
$$
we have 
$$
p/q=(\tilde{p}/\tilde{q})^a,
$$
for some $a\in \Q$.  
\end{proof}

We have shown that if $R,S$ satisfy equation \eqref{eqn:ry} then
for all sufficiently divisible $m\in \N$ we have
\begin{equation}
\label{eqn:rnm}
R^m \tilde{y}^{mN} = S^m \tilde{q}^{m/a},
\end{equation}
with 
\begin{equation*}
\label{eqn:srq}
\tilde{S}:=S^m\in k(\tilde{p}/\tilde{q}), 
\quad \tilde{R}:=R^m\in k(\tilde{x}/\tilde{y})
\, \text{ and } \, \tilde{q}:=q^m\in k(y) \subset k(\tilde{y}).
\end{equation*}
Choose a smallest possible $m$ 
such that $s:=m/a\in \Z$ and put
$r=mN$.
Equation~\ref{eqn:rnm} transforms to
$$
\tilde{R}\tilde{y}^{r}=\tilde{S}\tilde{q}^s.
$$
In the proof of 
Proposition~\ref{prop:funct-eq} 
we have shown that $s\mid r$ and that
either
$$
\tilde{R}=
\left(\frac{\tilde{x}}{\tilde{y}}\right)^{r_1s}
\prod_{i=1}^{r_1} 
\left(\frac{\tilde{x}}{\tilde{y}}-c_i\right)^{-s},
\quad
\tilde{S}=\left(\frac{\tilde{p}}{\tilde{q}}\right)^{s}
\left(\frac{\tilde{p}}{\tilde{q}}-d_1\right)^{-s} \tilde{q}^s
$$
with $r_1s=r$ 
or
$$
\tilde{R}=
\left(\frac{\tilde{x}}{\tilde{y}}\right)^{-r_1s}
\prod_{i=1}^{r_1} 
\left(\frac{\tilde{x}}{\tilde{y}}-c_i\right)^{s},
\quad
\tilde{S}=
\left(\frac{\tilde{p}}{\tilde{q}}-d_1\right)^{s} \tilde{q}^s
$$
with $-r_1s=r$.

\

We have obtained that every nonconstant element in 
the intersection
\begin{equation}
\label{eqn:condition}
\ovl{k(x/y)}^*\cdot y \cap \ovl{k(p/q)}^*\cdot q, 
\end{equation}
is of the form
\begin{equation}
\label{eqn:bsN}
\left(\frac{x^by^b}{x^b-\kappa y^b}\right)^s, s\in \N, 
\quad \text{ or } \quad 
\left(\frac{x^b-\kappa'y^b}{x^by^b}\right)^s, -s\in \N,
\end{equation}
with $b=r_1/N$, $N\in \N$, and $\kappa,\kappa'\in k^*$. 
The corresponding solutions, modulo $k^*$, are
\begin{equation*}
p_{\kappa_x,b,m}(x)=\left(\frac{x^{b}}{x^{b} + \kappa_x}\right)^{1/m}, 
\quad q_{\kappa_y,b,m}(y)=\left(\frac{y^{b}}{y^{b} + \kappa_y}\right)^{1/m},
\end{equation*}
with 
$$
\kappa =\kappa_x/\kappa_y
$$
respectively,
$$
p_{\kappa_x,b,m}(x) 
=\left(\frac{x^{b} +\kappa_x'}{x^{b}}\right)^{1/m},\quad  
q_{\kappa_y,b,m}(y)=\left(\frac{y^{b} +\kappa_y'}{y^{b}}\right)^{1/m},
$$
with 
$$
\kappa'=\kappa'_y/\kappa'_x.
$$
By equation \eqref{eqn:ry}, we have (for $s\in \Z$) 
$$
\left(\frac{x^by^b}{x^b-\kappa y^b}\right)^s \cdot y^{-1} \in \overline{k(x/y)}^*
$$
It follows that $bs=1$.

\begin{assu}
\label{assu:xy}
The pair $(x,y)$ satisfies the following condition: if
both $x^b, y^b\in K^*$ then $b\in \Z$.
\end{assu}

This assumption holds e.g., when either
$x,y$ or $xy$ is primitive in $K^*/k^*$.

\begin{lemm}
\label{lemm:bm}
Assume that the pair $(x,y)$ satisfies Assumption~\ref{assu:xy}. 
Fix a solution \eqref{eqn:bsN} of 
Condition~\eqref{eqn:condition}.
Assume that the corresponding 
$p_{\kappa_x,b,m}$ is in $K^*$,
for infinitely many $\kappa_x$, resp. $\kappa_x'$.
Then $b = \pm 1$ and $m=\pm 1$. 
\end{lemm}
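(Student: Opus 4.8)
The plan is to treat the two cases of \eqref{eqn:bsN} in parallel (they are interchanged by $x^{b}\mapsto x^{-b}$), so I concentrate on the first, where $p:=p_{\kappa_x,b,m}$ satisfies $p^{m}=x^{b}/(x^{b}+\kappa_x)$. First I would extract a power of $x$: solving the defining relation gives $x^{b}=\kappa_x p^{m}/(1-p^{m})$, and since $p,\kappa_x\in K^{*}$ while $p\notin k^{*}$ (otherwise $x^{b}\in k$), this shows $x^{b}\in K^{*}$.

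To pin down $m$ I would pass to additive shifts and use geometricity. Rewriting the relation as $p^{-m}=1+\kappa_x x^{-b}=\kappa_x\,(x^{-b}+\kappa_x^{-1})$ yields, in $K^{*}/k^{*}$, the identity $[\,x^{-b}+\kappa_x^{-1}\,]=-m\,[p]$. If $|m|\ge 2$ the right-hand side is a nontrivial multiple, so $x^{-b}+\kappa_x^{-1}$ is nonprimitive; as $\kappa_x$ ranges over the hypothesized infinite set, the shifts $x^{-b}+\kappa$ ($\kappa\in k$) are nonprimitive for infinitely many $\kappa$. Since $x^{-b}\in K^{*}\setminus k^{*}$ and $K$ is geometric over $k$, Definition~\ref{defn:field} permits only finitely many such shifts, a contradiction. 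Hence $m=\pm1$. This is the only place the infinitude of the parameter set is needed.

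It remains to show $b=\pm1$, for which I would produce $y^{b}\in K^{*}$ and invoke Assumption~\ref{assu:xy}. The cleanest input is that the companion solution $q_{\kappa_y,b,m}$ also lies in $K^{*}$, whereupon the identical computation gives $y^{b}=\kappa_y q^{m}/(1-q^{m})\in K^{*}$. If instead one is given only $p\in K^{*}$ together with the realized intersection element $Ry=Sq\in K^{*}$, then \eqref{eqn:bsN} shows $(x^{b}-\kappa y^{b})^{s}\in K^{*}$; factoring and using $x^{bs}=x$, $y^{bs}=y$ (the relation $bs=1$ established just before Assumption~\ref{assu:xy}) reduces this to $(w-\kappa)^{s}\in K^{*}$ with $w:=x^{b}/y^{b}$, $w^{s}=x/y\in K^{*}$. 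Setting $M:=K\cap\ovl{k(x/y)}$, the extension $M(w)/M$ is cyclic (as $k$ contains the $s$-th roots of unity), and $(w-\kappa)^{s}\in M^{*}$ can be Galois-invariant for some $\kappa\neq 0$ only if $[M(w):M]=1$; thus $w\in K^{*}$ and $y^{b}=x^{b}/w\in K^{*}$. With $x^{b},y^{b}\in K^{*}$, Assumption~\ref{assu:xy} gives $b\in\Z$, and $bs=1$ with $s\in\Z$ then forces $b=\pm1$.

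The main obstacle is precisely the passage to $y^{b}\in K^{*}$: the hypothesis is phrased only for $p$, so the crux is to feed in the symmetric information coming from $q$. When only $Ry=Sq\in K^{*}$ is available, the delicate point is the descent step that strips the $s$-th power---this rests on the cyclicity of $M(w)/M$ and on the choice $\kappa\neq 0$, so that Galois-invariance of $(w-\kappa)^{s}$ collapses the extension. By contrast, the determination of $m$ is routine once the geometric condition of Definition~\ref{defn:field} is brought to bear.
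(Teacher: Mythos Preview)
Your argument for $m=\pm 1$ is exactly the paper's: from $p\in K^*$ you extract $x^{-b}\in K^*$, rewrite $p^{-m}$ as a $k^*$-multiple of $x^{-b}+\kappa_x^{-1}$, and invoke the geometricity of $K$ (Definition~\ref{defn:field}) to force primitivity for infinitely many shifts.

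For $b=\pm 1$ you take a genuinely different route. The paper does not manufacture $y^{b}\in K^*$ at all; it simply unwinds the bookkeeping of the reduction: with $b=r_1/N$, $r=\pm r_1 s$ (from Proposition~\ref{prop:funct-eq}), $r=mN$, and the already established $bs=1$, the integer relations among $r_1,N,r,s$ collapse to $b=\pm 1$ once $m=\pm 1$ is known. Your approach instead produces $y^{b}\in K^*$ and appeals to Assumption~\ref{assu:xy} to get $b\in\Z$, after which $bs=1$ with $s\in\Z$ finishes. Both are valid; the paper's route is shorter but opaque (one has to keep straight which of the several $m$'s and $s$'s is meant), while yours is conceptually cleaner and makes the role of Assumption~\ref{assu:xy} explicit.

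One caveat on your Option~B. The Kummer descent is correct: with $w=(x/y)^{b}$ and $(w-\kappa)^{s}\in M:=K\cap\overline{k(x/y)}$ for some $\kappa\neq 0$, Galois invariance forces $w\in M$, hence $y^{b}=x^{b}/w\in K^*$. But this relies on the intersection element $I=Ry=Sq$ lying in $K^*$, which is \emph{not} part of the lemma's stated hypotheses (only $p_{\kappa_x,b,m}\in K^*$ is assumed). In the intended application (Theorem~\ref{thm:T} and the argument in Section~\ref{sect:proof}) the intersection is taken inside $K^*/k^*$, so $I\in K^*$ and your Option~B applies; likewise your Option~A is available there since one is given pairs $(p,q)\in K^*\times K^*$. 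As a standalone proof of the lemma exactly as stated, however, you would need to fall back on the paper's parameter chase, which does not require $q\in K^*$ or $I\in K^*$.
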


\begin{proof}
By the assumption on the pair $(x,y)$ and $K$, 
$$
\frac{x^b}{x^b+\kappa_x} 
$$
is primitive in $K^*/k^*$,  
for infinitely many $\kappa_x$. 
It follows that $m=\pm 1$.  
To deduce that $b=\pm 1$ it suffices 
to recall the definitions: 
on the one hand, $b=r_1/N\in \Z$, with $N \in \N$, 
$r_1\in \N$, and  $r=\pm N$.
Thus, $b=\pm r_1/r\in \Z$. On the other hand, 
$\pm r_1s=r$, with $s\in \N$. 
\end{proof}

After a further substitution $\delta = -b$, we obtain:

\begin{theo}
\label{thm:T} 
Let $x,y\in K^*$ be algebraically independent elements
satisfying Assumption~\ref{assu:xy}.
Let $p\in\ovl{k(x)}^*$, $q\in \ovl{k(y)}^*$ be rational functions such that
$x,y,p,q$ are multiplicatively independent in $K^*/k^*$.
Let $I\in \ovl{k(x/y)}^*\cdot y$ be such that 
there exist infinitely many $p, q\in K^*/k^*$ with  
\begin{equation*}
\label{eqn:condition-2}
I\in \ovl{k(x/y)}^*\cdot y \cap \ovl{k(p/q)}^*\cdot q.
\end{equation*}
Then, modulo $k^*$, 
\begin{equation}
\label{eqn:ixy}
I=I_{\kappa,\delta}(x,y):=
(x^{\delta}-\kappa y^{\delta})^{\delta},
\end{equation}
with $\kappa\in k^*$ and $\delta = \pm 1$. 
The corresponding $p$ and $q$ are given by 
$$
\begin{array}{rclrcl} 
p_{\kappa_x, 1}(x)& = & x+\kappa_x, &  q_{\kappa_y,1}(y)& = & y+\kappa_y\\
p_{\kappa_x,-1}(x)& =& (x^{-1}+\kappa_x)^{-1}, & q_{\kappa_x,-1}(y)& =& (y^{-1}+\kappa_y)^{-1}
\end{array}
$$
with 
$$
\kappa_x/\kappa_y=\kappa.
$$ 
\end{theo}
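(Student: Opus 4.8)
The plan is to recognize that this statement is the synthesis of the functional-equation analysis of Proposition~\ref{prop:funct-eq} together with the descent results of Lemmas~\ref{lemm:p}, \ref{lemm:pp}, \ref{lemm:S} and Corollary~\ref{coro:sss}, so that the proof amounts to translating the intersection condition into equation~\eqref{eqn:ry} and reading off the consequences. First I would unwind the membership hypothesis: an element $I\in \ovl{k(x/y)}^*\cdot y$ lying also in $\ovl{k(p/q)}^*\cdot q$ can be written as $I=Ry=Sq$ with $R\in \ovl{k(x/y)}^*$ and $S\in \ovl{k(p/q)}^*$, which is precisely equation~\eqref{eqn:ry}. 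Since the asserted normal form $(x^\delta-\kappa y^\delta)^\delta$ is nonconstant and the multiplicative independence of $x,y,p,q$ is assumed, $R$ and $S$ are nonconstant, and so the entire chain of reductions applies verbatim.

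Next I would invoke the reduction already carried out: after replacing $x,y$ by $\tilde x=x^{1/N}$, $\tilde y=y^{1/N}$, the hypotheses of Proposition~\ref{prop:funct-eq} are met, and its solution set forces $I$ to take one of the two shapes recorded in \eqref{eqn:bsN}, with $b=r_1/N$ and with the associated $p,q$ equal to $p_{\kappa_x,b,m}$ and $q_{\kappa_y,b,m}$. The relation $Ry=Sq$ then pins the exponent through the identity $bs=1$.

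The decisive use of the hypotheses enters now. The assumption that $I$ lies in the intersection \eqref{eqn:condition} for \emph{infinitely many} pairs $(p,q)$ means exactly that $p_{\kappa_x,b,m}\in K^*$ for infinitely many values of $\kappa_x$, respectively $\kappa_x'$. Feeding this into Lemma~\ref{lemm:bm}, together with Assumption~\ref{assu:xy}, forces $m=\pm 1$ — so the spurious $m$-th root disappears and $p,q$ become genuine rational functions — and simultaneously $b=\pm 1$. With $b=\pm 1$ and $s=1/b=\pm 1$, the two cases of \eqref{eqn:bsN} collapse, after the substitution $\delta=-b$ and an adjustment of the constant $\kappa$ under $x\leftrightarrow y$, into the single expression \eqref{eqn:ixy}, and the corresponding $p_{\kappa_x,\pm 1}$, $q_{\kappa_y,\pm 1}$ are exactly the functions listed in the statement.

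I expect the only real friction to be bookkeeping rather than anything conceptual: one must verify that the ``infinitely many $(p,q)$'' condition genuinely survives the passage to $\tilde x,\tilde y$, so that Lemma~\ref{lemm:bm} is applied after, and not before, the coordinate change; and one must check that the two sign cases in \eqref{eqn:bsN}, once $b=\pm 1$ and $m=\pm 1$, match the $\delta=\pm 1$ normalization up to the allowed rescaling in $k^*$ and the symmetric relabeling $\kappa\mapsto\kappa^{-1}$. No new estimate or construction is required beyond what the preceding lemmas already supply.
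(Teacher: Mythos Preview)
Your proposal is correct and follows essentially the same approach as the paper: the theorem is stated immediately after the chain of reductions (Lemmas~\ref{lemm:p}--\ref{lemm:closure}, Corollary~\ref{coro:sss}, the derivation of \eqref{eqn:bsN}, the identity $bs=1$, and Lemma~\ref{lemm:bm}), and the paper's proof is literally the one-line remark ``After a further substitution $\delta=-b$, we obtain'' Theorem~\ref{thm:T}. Your only slightly circular moment is justifying that $R,S$ are nonconstant by appealing to the form of the conclusion; the honest argument is that if, say, $R\in k^*$ then $I=y$ modulo $k^*$, whence $S=y/q\in \ovl{k(p/q)}\cap \ovl{k(y)}=k$, forcing $q$ and $y$ to be multiplicatively dependent, contrary to hypothesis---but the paper simply imposes this as a standing assumption, so you are in no worse shape than the original.
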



\section{Reconstruction}
\label{sect:proof}

In this section we prove Theorem~\ref{thm:zero}.
We start with an injective homomorphisms of abelian groups 
$$
\psi_1\,:\, K^*/k^*\ra L^*/l^*.
$$ 
Assume that $z\in K^*$ is primitive
in $K^*/k^*$ and that its image under $\psi_1$ is also
primitive.
Let $x\in K^*$ be an element algebraically independent from $z$ and
put $y=z/x$. By Theorem~\ref{thm:T}, the intersection 
$$
\overline{k(x/y)}^*\cdot y \cap 
\overline{k(p/q)}^* \cdot q  \subset K^*/k^*
$$
with infinitely many corresponding pairs $(p,q)\subset K^*\times K^*$, 
consists of elements $I_{\kappa,\delta}(x,y)$ given in \eqref{eqn:ixy}.
Note that 
$$
I_{\kappa, \delta}(x,y) \neq I_{\kappa', \delta'}(x,y), \quad \text{ for } 
\quad (\kappa,\delta)\neq (\kappa',\delta').
$$
For $\delta=1$, each $I_{\kappa,1}$ determines the infinite sets
$$
\mathfrak l^{\circ}(1,x)=\left\{ 1, x+\kappa_x\right\}_{\kappa_x\in k^*},\quad
\mathfrak l^{\circ}(1,y)=\left\{ 1, y+\kappa_y\right\}_{\kappa_y\in k^*}\quad
$$ 
as the corresponding solutions $(p,q)$. 
The set 
$$
\mathfrak l(1,x):=x\cup \mathfrak l^{\circ}(1,x)\subset \P_k(K)
$$ 
forms a projective line. 
On the other hand, for $\delta=-1$, we get
the set
$$
\mathfrak r(1,x)=\left\{ 1, \frac{1}{x^{-1}+\kappa}\right\}_{\kappa\in k}. 
$$
Note that this set becomes a projective line in $\P_k(K)$, 
after applying the automorphism 
$$
\begin{array}{rcl}
K^*/k^* & \ra &  K^*/k^* \\
f       & \mapsto & f^{-1}. 
\end{array}
$$

We can apply the same arguments to $\psi_1(x), \psi_1(y)=\psi_1(z)/\psi_1(x)$. 
Our assumption that $\psi_1$ maps multiplicative groups
of 1-dimensional subfields of $K$ into 
multiplicative groups of 1-dimensional subfields of $L$  
and Theorem~\ref{thm:T} imply that $\psi_1$ 
maps the projective line 
$\mathfrak l(1,x)\subset \P_k(K)$ to either the projective line 
$\mathfrak l(1,\psi_1(x))\subset \P_l(L)$
or to the set $\mathfrak r(1,\psi_1(x))$. 
Put 
$$
\begin{array}{rl}
\mathcal L & :=
\left\{ x\in K^* \,|\, \psi_1(\mathfrak l(1,x))=\mathfrak l(1,\psi_1(x))
\right\} \\
 &  \\
\mathcal R & := 
\left\{ x\in K^* \,|\, \psi_1(\mathfrak l(1,x))=\mathfrak r(1,\psi_1(x))
\right\}.  
\end{array}
$$ 
Note that these definitions are intrinsic, i.e., they don't depend on 
the choice of $z$. 

By the assumption on $K$, 
both $\mathfrak l(1,\psi_1(x))$ and $\mathfrak r(1,\psi_1(x))$ 
contain infinitely many primitive elements in $L^*/l^*$, 
whose lifts to $L^*$ are algebraically independent from lifts of $\psi_1(z)$. 
We can use these primitive elements as a basis for our constructions
to determine the type of the image 
of $\mathfrak l(1,z')$ for every $z'\in \overline{k(z)}^*\cap K^*$. 
Thus
$$
\mathcal L\cup \mathcal R =K^*/k^*,\quad 
\mathcal L\cap \mathcal R =1 \in K^*/k^*.
$$

\begin{lemm}
\label{lemm:both}
Both sets $\mathcal L$ and $\mathcal R$ are subgroups of $K^*/k^*$. 
In particular, one of these is trivial and the other equal to $K^*/k^*$.
\end{lemm}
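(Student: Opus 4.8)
The plan rests on the elementary observation that an abelian group which is the union $\mathcal L\cup\mathcal R$ of two subgroups meeting only in the identity must have one of them trivial: if $a\in\mathcal L\setminus\{1\}$ and $b\in\mathcal R\setminus\{1\}$, then $ab\in\mathcal L\cup\mathcal R$ forces $b\in\mathcal L$ or $a\in\mathcal R$, whence $a$ or $b$ lies in $\mathcal L\cap\mathcal R=\{1\}$, a contradiction. Since $\mathcal L\cup\mathcal R=K^*/k^*$ and $\mathcal L\cap\mathcal R=\{1\}$ are already established, the ``in particular'' clause follows once $\mathcal L$ and $\mathcal R$ are shown to be subgroups. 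To cut down the work I would first record the inversion symmetry: writing $\iota$ for $f\mapsto f^{-1}$ on $K^*/k^*$ and on $L^*/l^*$, one checks $\iota(\mathfrak l(1,x))=\mathfrak r(1,x^{-1})$ and $\iota(\mathfrak r(1,x))=\mathfrak l(1,x^{-1})$, while $\iota$ commutes with $\psi_1$ and preserves every one-dimensional subfield. Hence $\tilde\psi:=\iota\circ\psi_1$ again meets the standing hypotheses and interchanges the two types, so $\mathcal R(\psi_1)=\mathcal L(\tilde\psi)$; it therefore suffices to prove that $\mathcal L$ is a subgroup for every admissible map, i.e. that it is closed under products, closure under inverses being handled next.

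Closure of $\mathcal L$ under inversion is a short computation. For $x\in\mathcal L$ and $a\in k^*$ the multiplicative identity $x^{-1}-a=-a\,(x-a^{-1})\,x^{-1}$ and the homomorphism property give $\psi_1(x^{-1}-a)=\psi_1(x-a^{-1})\,\psi_1(x)^{-1}$; using $\psi_1(x-a^{-1})\in\mathfrak l(1,\psi_1(x))$ this simplifies, modulo $l^*$, to an element of $\mathfrak l(1,\psi_1(x^{-1}))$. Thus $\psi_1(\mathfrak l(1,x^{-1}))\subseteq\mathfrak l(1,\psi_1(x^{-1}))$, and since $x^{-1}$ already has a well-defined type, the inclusion rules out the $\mathfrak r$-alternative and forces $x^{-1}\in\mathcal L$.

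The heart of the matter, and the step I expect to be the main obstacle, is closure under multiplication: $\psi_1$ does not respect addition, so the pencil $\{x_1x_2+\kappa\}$ of a product bears no direct relation to the pencils of $x_1,x_2$, and the inversion trick has no product analogue. The idea I would pursue is to read the type of an element off a Theorem~\ref{thm:T} configuration rather than off its pencil in isolation. For algebraically independent $x,y$ meeting Assumption~\ref{assu:xy}, the solutions $(p,q)$ of Condition~\eqref{eqn:condition} occur in pairs sharing a common sign $\delta$: either $p=x+\kappa_x,\ q=y+\kappa_y$, or the inverted form. Because $\psi_1$ carries multiplicative groups of one-dimensional subfields to such groups, it transports this entire incidence pattern to the configuration of $\psi_1(x),\psi_1(y)$, sending a matched pair to a matched pair; consequently $x$ and $y$ are assigned the \emph{same} type. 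Thus any two algebraically independent admissible elements have equal type, and since $K$ has transcendence degree $\ge2$ any two classes can be compared through a common third element (taken algebraically independent from both when they are dependent, directly when they are independent). This makes the type constant on $K^*/k^*\setminus\{1\}$, so that one of $\mathcal L,\mathcal R$ is all of $K^*/k^*$ and the other is $\{1\}$; in particular both are subgroups, and the matched-pair argument in fact overshoots the mere closure needed above. The point demanding the most care is precisely this propagation: one must verify that the primitivity and multiplicative-independence conditions required to realize a given pair inside a Theorem~\ref{thm:T} configuration can be arranged along a family of auxiliary elements linking any two prescribed classes, so that the relation ``same type'' genuinely connects all of $K^*/k^*$.
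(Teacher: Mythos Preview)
Your plan is sound in outline but follows a genuinely different route from the paper, and one step is stated more optimistically than it should be.

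\textbf{Comparison with the paper.} The paper argues closure: from $x,y\in\mathcal L$ algebraically independent it deduces $x/y\in\mathcal L$ via the same matched-pair mechanism you describe (the shared $\delta$ in Theorem~\ref{thm:T}); then, by a separate field-theoretic argument using integral closure and the constant term of a minimal polynomial, it shows that $x\in\mathcal L$ forces all of $\overline{k(x)}^*\cap K^*$ into $\mathcal L$. These two closure statements make $\mathcal L$ a subgroup; the symmetry via $f\mapsto f^{-1}$ (the paper's ``$\psi_1^{-1}$'' is exactly your $\iota\circ\psi_1$) handles $\mathcal R$; and the union-of-two-subgroups argument finishes. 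Your plan instead extracts from the matched-pair mechanism the stronger observation that in any valid Theorem~\ref{thm:T} configuration the two coordinates receive the \emph{same} type, and then propagates this equality through chains to conclude the type is globally constant. This bypasses the paper's minimal-polynomial step entirely, which is a real simplification; conversely, the paper's argument never needs to worry about linking arbitrary elements through admissible configurations.

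\textbf{The point needing repair.} Your sentence ``directly when they are independent'' is where the plan is too quick. Algebraic independence of $a,b$ does not by itself give a valid configuration: Assumption~\ref{assu:xy} can fail (e.g.\ $a=f^2$, $b=g^2$), and more importantly you need the assumption on the $L$-side for $(\psi_1(a),\psi_1(b))$ as well, which you have no direct control over. So an intermediary is needed in \emph{every} case, and that intermediary must be primitive in $K^*/k^*$ with primitive image in $L^*/l^*$ and algebraically independent from the elements in question. The existence of enough such intermediaries is exactly what the paragraph preceding the lemma establishes: once the type of some $x$ (algebraically independent from the initial $z$) is known, the image line $\mathfrak l(1,\psi_1(x))$ or $\mathfrak r(1,\psi_1(x))$ contains infinitely many primitive elements of $L^*/l^*$, and by the geometric hypothesis on $K$ cofinitely many $x+\kappa$ are primitive in $K^*/k^*$; intersecting gives an ample supply. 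If you invoke this explicitly and route every comparison through such an intermediary, your propagation goes through and your argument is complete.
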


\begin{proof}
Assume that $x,y$ are algebraically independent and are both in 
$\mathcal L$. 
We have
$$
\psi_1(I_{\kappa,1}(x,y))=I_{\kappa,1}(\psi_1(x),\psi_1(y)).
$$
Indeed, fix elements 
$$
p(x)=x+\kappa_x\in \mathfrak l(1,x) \quad \text{ and }\quad  
q(y)=y+\kappa_y\in \mathfrak l(1,y)
$$
so that $x,y,p,q$ satisfy the assumptions of Theorem~\ref{thm:T}. 
Solutions of 
$$
R(x/y)y = S(p/q)q
$$
map to solutions of a similar equation in $L$. 
These are exactly
$$
I_{\kappa,1}(\psi_1(x),\psi_1(y))= \psi_1(x)- \lambda \psi_1(y) \in L^*/l^*,
$$
for some $\lambda\in l^*$. 
This implies that   
$$
\psi_1(x/y -\kappa) = \psi_1(x/y)- \lambda \in L^*/l^*,
$$
i.e., 
$x/y\in \mathcal L$.

Now we show that if $x\in \mathcal L$ then every 
$x'\in \overline{k(x)}^*/k^*\cap  K^*/k^*$
is also in $\mathcal L$. First of all, 
$1/x\in  \mathcal L$. 
Next, elements in the ring $k[x]$, modulo $k^*$,  
can be written as products of 
linear terms $x+\kappa_i$. 
Hence 
$$
\psi_1(k[x]/k^*)\subset l[\psi_1(x)]/l^*.
$$
Let $f$ be integral over $k[x]$ and let
$$
f^n +\ldots + a_0(x)\in k[x]
$$
be the minimal polynomial for $f$,
where $a_0(x)\notin k$. Replacing $f$ by $f+\kappa$, if necessary, 
we may assume that $f$ is not a unit in the ring $\overline{k[x]}$. 
Then $f\notin \mathcal R$, since otherwise we would have
$a_0(x)\in \mathcal R$, contradiction. 
Finally, any element of $\overline{k(x)}^*$ is contained in
the integral closure of some $k[1/g(x)]$, with $g(x)\in \overline{k[x]}$.

The same argument applies to 
$\mathcal R$, once we composed with $\psi_1^{-1}$, to show
that both $\mathcal L$ and $\mathcal R$ are subgroups of $K^*/k^*$.
An abelian group cannot be a union of two subgroups intersecting only in
the identity. Thus either $\mathcal L$ or $\mathcal R$ has to be trivial. 
\end{proof}

The set $\P(K)=K^*/k^*$ carries two compatible structures: of 
an abelian group and a projective space, 
with projective subspaces preserved by
the multiplication. 
The projective structure on the multiplicative group 
$\P(K)$ encodes the field structure:

\begin{prop}\cite[Section 3]{bt}
\label{prop:proj}
Let $K/k$ and $L/l$ be geometric fields 
over $k$, resp. $l$, of transcendence of degree $\ge 2$.  
Assume that 
$\psi_1\,:\, K^*/k^*\ra L^*/l^*$
maps lines in $\P(K)$ into lines in $\P(L)$. 
Then $\psi_1$ is a morphism of projective structures,
$\psi_1(\P(K))$ is a projective subspace in $\P(L)$,
and there exist a subfield $L'\subset L$ and 
an isomorphism of fields 
$$
\psi\,:\, K\ra L',
$$ 
which compatible with $\psi_1$.  
\end{prop}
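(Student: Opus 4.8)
The statement is a form of the \emph{fundamental theorem of projective geometry}, enriched by the multiplicative structure on $\P(K)=K^*/k^*$. The plan is to first extract from the line-preserving hypothesis a semilinear map of the underlying vector spaces, and then to rigidify that map into a field homomorphism by exploiting that $\psi_1$ is \emph{simultaneously} a homomorphism of the multiplicative groups.

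First I would upgrade ``maps lines into lines'' to a genuine morphism of projective structures. Recall that a line in $\P(K)$ is the projectivization $\P(V)$ of a $2$-dimensional $k$-subspace $V\subset K$; the content to be verified is that $\psi_1$ reflects incidence, so that it carries every projective subspace of $\P(K)$ onto a projective subspace of $\P(L)$ of the correct dimension, and in particular that $\psi_1(\P(K))$ is a flat rather than being collapsed into something degenerate. The line-preservation is given; what must be excluded is that a non-collinear configuration has collinear image. Here injectivity of $\psi_1$ together with the geometric hypotheses on $K$ and $L$ — in particular that the lines $\mathfrak l(1,\psi_1(x))$ and $\mathfrak r(1,\psi_1(x))$ contain infinitely many primitive elements whose lifts are algebraically independent, as exploited throughout Section~\ref{sect:proof} — is precisely what provides the nondegeneracy. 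This identifies $\psi_1(\P(K))$ with a projective subspace and makes $\psi_1$ a morphism of projective structures.

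Next I would invoke the fundamental theorem of projective geometry in its form for morphisms (Artin, or Faure--Fr\"olicher): a line-preserving injection between projective spaces of dimension $\ge 2$ whose image is a projective subspace is induced by a $\sigma$-semilinear injective map
$$
\Phi\,:\,K\ra L, \qquad \Phi(af+bg)=\sigma(a)\Phi(f)+\sigma(b)\Phi(g),
$$
for a field embedding $\sigma\,:\,k\ra l$, with $[\Phi(f)]=\psi_1([f])$ for all $f$. Since $\psi_1$ is a group homomorphism, $\psi_1([1])=[1]$, so $\Phi(1)\in l^*$, and after rescaling $\Phi$ I may normalize $\Phi(1)=1$.

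The final step is to promote $\Phi$ from semilinear to multiplicative. Because $\psi_1([fg])=\psi_1([f])\psi_1([g])$, there is a scalar $c(f,g)\in l^*$ with $\Phi(fg)=c(f,g)\,\Phi(f)\Phi(g)$, and I want $c\equiv 1$. For transcendental $f$, comparing $\Phi(f^2+f)=\Phi(f^2)+\Phi(f)$ (additivity) with $\Phi(f(f+1))=c(f,f+1)\Phi(f)(\Phi(f)+1)$ and $\Phi(f^2)=c(f,f)\Phi(f)^2$ yields
$$
c(f,f)\,\Phi(f)^2+\Phi(f)=c(f,f+1)\,\Phi(f)^2+c(f,f+1)\,\Phi(f).
$$
Since $\Phi(f)$ is transcendental over $l$ for a large supply of $f$ (guaranteed by the algebraic-independence hypothesis on the image of $\psi_1$), comparing coefficients forces $c(f,f)=c(f,f+1)=1$, in particular $\Phi(f^2)=\Phi(f)^2$; polarizing this identity in characteristic zero gives $\Phi(fg)=\Phi(f)\Phi(g)$ whenever $f,g,f+g$ are transcendental, and since such elements generate $K$ over $k$ this propagates $c\equiv1$ to all of $K$. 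Hence $\Phi$ is an injective ring homomorphism, and with $L':=\Phi(K)$ we obtain the field isomorphism $\psi:=\Phi\,:\,K\ra L'$ inducing $\psi_1$. I expect the first step — passing from ``lines into lines'' to a morphism of projective structures with a full subspace as image — to be the main obstacle, since it is exactly there that the geometric nondegeneracy of $K$ and $L$ and the dimension hypothesis must be brought to bear; the cocycle-trivialization in the last step is then comparatively formal.
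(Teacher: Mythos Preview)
The paper does not actually prove this proposition: it is stated with a citation to \cite[Section 3]{bt} and then used as a black box to conclude the proof of Theorem~\ref{thm:zero}. So there is no ``paper's own proof'' to compare against beyond the reference.

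Your approach --- invoke the fundamental theorem of projective geometry to get a $\sigma$-semilinear lift $\Phi$, then kill the multiplicative $2$-cocycle $c(f,g)$ using additivity --- is the standard one and is essentially what the cited reference does. Two remarks. First, your cocycle step is cleaner than you present it: since $l$ is algebraically closed and $\psi_1$ is injective, $\Phi(f)\in l$ forces $f\in k$, so $\Phi(f)$ is transcendental over $l$ for \emph{every} $f\in K\setminus k$, not just ``a large supply''. Hence $\Phi(f^2)=\Phi(f)^2$ holds for all $f\in K$ (for $f\in k$ it follows from $\Phi|_k=\sigma$), and polarization gives multiplicativity globally without any genericity bookkeeping. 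Second, in the first step you should not lean on the material of Section~\ref{sect:proof} (the $\mathfrak l/\mathfrak r$ dichotomy) to establish nondegeneracy: logically that section \emph{uses} the present proposition, so you would be arguing in a circle. The nondegeneracy needed for Faure--Fr\"olicher is simply that the image of $\psi_1$ is not contained in a single line of $\P(L)$, which follows directly from injectivity together with $\mathrm{tr\,deg}(K/k)\ge 2$ (two algebraically independent elements of $K$ cannot lie on one line in $\P(K)$, hence neither can their images). With those two adjustments your sketch is correct.
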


Lemma~\ref{lemm:both} shows that either $\psi_1$ or $\psi_1^{-1}$
satisfies the conditions of Proposition~\ref{prop:proj}. 
This proves Theorem~\ref{thm:zero}.

\

\section{Milnor {\rm K}-groups}
\label{sect:prelim}

Let $K=k(X)$ be a function field of an algebraic variety $X$ over
an algebraically closed field $k$. 
In this section we characterize intrinsically infinitely divisible 
elements in ${\rm K}_1^M(K)$ and  ${\rm K}_2^M(K)$.
For $f\in K^*$ put 
\begin{equation}
\label{eqn:ker22}
\Ker_2(f):=\{\, g\in K^*/k^* = \bar{\rK}^M_1(K)\,\mid \, 
(f,g)= 0 \in \bar{\rK}_2^M(K) \,\}.
\end{equation} 

\begin{lemm}
\label{lemm:k1}
An element $f\in K^*={\rm K}_1^M(K)$ is infinitely divisible if and only if
$f\in k^*$.  
In particular, 
\begin{equation}
\label{mult}
\bar{\rK}^M_1(K)=K^*/k^*.
\end{equation}
\end{lemm}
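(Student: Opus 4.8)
The plan is to identify the subgroup of infinitely divisible elements of $\rK_1^M(K)=K^*$ with $k^*$; the displayed identity \eqref{mult} is then immediate, since $\bar{\rK}_1^M(K)$ is by definition the quotient of $K^*$ by this subgroup. Here an element is infinitely divisible precisely when it lies in $\bigcap_{n\ge 1}(K^*)^n$, and this intersection is a subgroup.

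First I would dispose of the easy inclusion. Since $k$ is algebraically closed, every $c\in k^*$ admits an $n$-th root in $k^*$ for all $n$, so $c\in (k^*)^n\subseteq (K^*)^n$; hence $k^*\subseteq \bigcap_{n\ge 1}(K^*)^n$, i.e. every element of $k^*$ is infinitely divisible.

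For the reverse inclusion I would argue by contraposition: given $f\in K^*\setminus k^*$, I produce an $n$ for which $f$ has no $n$-th root. As $k$ is algebraically closed, $f$ is nonconstant, hence transcendental over $k$. Fix a normal projective model $\bar{X}$ with $k(\bar{X})=K$. Since $\bar{X}$ is proper and normal over the algebraically closed field $k$, the only rational functions with no zeros and no poles are the constants; because $f\notin k^*$, this forces $\dv(f)\neq 0$, so there is a prime divisor $D$ with $a:=\ord_D(f)\neq 0$. If $f=g^n$ for some $g\in K^*$, then $a=\ord_D(f)=n\cdot\ord_D(g)$, so $n\mid a$; taking any $n$ with $n\nmid a$ (say $n=|a|+1$) shows $f\notin (K^*)^n$. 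Thus $f$ is not infinitely divisible, and the subgroup of infinitely divisible elements is exactly $k^*$, giving $\bar{\rK}_1^M(K)=K^*/k^*$.

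The only step requiring care — the main, though routine, obstacle — is the existence of a divisorial valuation on which $f$ has nonzero order; this is where properness of the projective model together with the algebraic closedness of $k$ is used, forcing a nonconstant function to have a pole. One may equally well bypass the model and work with the transcendental subextension $k(f)\cong k(t)$: any extension to $K$ of the valuation $\ord_{t=0}$ on $k(t)$ assigns to $f$ the value $e>0$, where $e$ is the ramification index, yielding the same conclusion.
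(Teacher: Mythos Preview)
Your argument is correct and is essentially the same as the paper's: the paper compresses the reverse inclusion into the exact sequence $0\to k^*\to K^*\to \Div(X)$ together with the remark that nonzero elements of the free abelian group $\Div(X)$ are not infinitely divisible, while you unpack this by choosing a prime divisor $D$ with $\ord_D(f)\neq 0$ and using divisibility of the integer $\ord_D(f)$. The alternative you sketch via extending $\ord_{t=0}$ from $k(f)$ to $K$ is a minor variant of the same idea.
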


\begin{proof}
First of all, every element in $k^*$ is infinitely divisible, 
since $k$ is algebraically closed. 
We have an exact sequence
$$
0\to k^*\to K^*\to \Div(X).
$$ 
The elements of $\Div(X)$ are not infinitely divisible. 
Hence every infinitely divisible element of $K^*$ is in $k^*$. 
\end{proof}

\begin{lemm}
\label{lemm:divis}  
Given a nonconstant $f_1\in K^*/k^*$, we have 
$$
\Ker_2(f_1)=E^*/k^*,
$$ 
where $E=\overline{k(f_1)}\cap K$. 
\end{lemm}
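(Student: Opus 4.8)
The statement splits into the two inclusions $E^*/k^* \subseteq \Ker_2(f_1)$ and $\Ker_2(f_1) \subseteq E^*/k^*$, where $E = \ovl{k(f_1)}\cap K$ is the relative algebraic closure of $k(f_1)$ in $K$, a field of transcendence degree $1$ over $k$. The guiding idea is that $(f_1,g)=0$ in $\bar{\rK}_2^M(K)$ means precisely that the symbol $(f_1,g)\in \rK_2^M(K)$ is infinitely divisible, and I will detect (non)divisibility through the residue (tame symbol) homomorphisms, invoking Lemma~\ref{lemm:k1} to identify the infinitely divisible part of the relevant residue fields.

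For the inclusion $E^*/k^* \subseteq \Ker_2(f_1)$: given $g\in E^*$, both $f_1$ and $g$ lie in $E$, so $(f_1,g)$ is the image under $\rK_2^M(E)\to \rK_2^M(K)$ of the symbol $\{f_1,g\}_E$. Since $k$ is algebraically closed and $\trdeg(E/k)=1$, the group $\rK_2^M(E)$ is divisible (classical; see \cite{milnor}). Hence $\{f_1,g\}_E$, and therefore its image $(f_1,g)$, is infinitely divisible in $\rK_2^M(K)$, i.e. $(f_1,g)=0$ in $\bar{\rK}_2^M(K)$, so $g\in \Ker_2(f_1)$.

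For the reverse inclusion I argue by contraposition: suppose $g\in K^*\setminus k^*$ with $g\notin E$, and I exhibit a residue map on which $(f_1,g)$ is nontrivial modulo infinitely divisible elements. Since $g$ is not algebraic over $k(f_1)$ while $f_1$ is transcendental, $f_1$ and $g$ are algebraically independent over $k$. Let $\phi\colon X\to C$ be the fibration with $k(C)=E$; as $E$ is relatively algebraically closed in $K$, the generic fibre is geometrically irreducible with field of constants $E$, so the nonconstant function $g$ has a nontrivial zero or polar divisor on it. Passing to a suitable normal projective model, this produces a prime divisor $D$ that is horizontal for $\phi$ with $v_D(g)=n\neq 0$. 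Because $D$ dominates $C$, the valuation $v_D$ is trivial on $E$; in particular $v_D(f_1)=0$ and the residue $\bar f_1\in k(D)^*$ is the image of $f_1$ under $E\hookrightarrow k(D)$, which is nonconstant. The tame symbol then gives $\partial_{v_D}(f_1,g)=\bar f_1^{\,n}\in k(D)^*$. Since $\partial_{v_D}$ carries infinitely divisible elements to infinitely divisible elements and, by Lemma~\ref{lemm:k1} applied to the function field $k(D)$, the only infinitely divisible elements of $k(D)^*$ are the constants, the nonconstant element $\bar f_1^{\,n}$ witnesses that $(f_1,g)$ is not infinitely divisible. Thus $(f_1,g)\neq 0$ in $\bar{\rK}_2^M(K)$ and $g\notin \Ker_2(f_1)$.

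The main obstacle is the geometric input in the second inclusion: one must guarantee a horizontal component of $\dv(g)$ along which $f_1$ restricts to a nonconstant function and has trivial valuation. This is where algebraic independence of $f_1$ and $g$, together with the geometric irreducibility of the generic fibre of $\phi$ (which encodes that $E$ is relatively algebraically closed in $K$), is essential; everything else is the formal compatibility of the tame symbol with reduction modulo infinitely divisible elements.
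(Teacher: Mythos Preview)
Your proof is correct and follows essentially the same strategy as the paper: divisibility of $\rK_2^M$ for one-dimensional function fields over $k$ gives the inclusion $E^*/k^*\subseteq \Ker_2(f_1)$, and a tame-symbol (residue) computation together with Lemma~\ref{lemm:k1} gives the reverse inclusion. The only differences are cosmetic: for the first inclusion the paper makes the divisibility explicit via Merkurjev--Suslin and Tsen (your ``classical'' citation ultimately rests on the same input), and for the second inclusion the paper takes a component of $\dv(f_1)$ dominating the $f_2$-line in $\P^1\times\P^1$, whereas you take a horizontal component of $\dv(g)$ for the fibration $X\to C$ with $k(C)=E$ --- a symmetric variant of the same residue argument.
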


\begin{proof} 
Let $X$ be a normal projective model of $K$. 
Assume first that $f_1,f_2\in K\setminus k$ 
lie in a 1-dimensional subfield $E\subset K$ 
that contains $k$ and is normally closed in $K$.
Such a field $E$ defines a rational map $\pi\,:\, X\ra C$, 
where $C$ is a projective model of $E$. 

By the Merkurjev--Suslin theorem \cite{MS},
for any field $F$ containing $n$-th roots of unity one has
$$
{\rm Br}(F)[n]= {\rm K}^M_2(F))/({\rm K}^M_2(F))^n,
$$ 
where ${\rm Br}(F)[n]$ is the $n$-torsion subgroup of
the Brauer group ${\rm Br}(F)$.
On the other hand, by Tsen's theorem, ${\rm Br}(E)= 0$, since
$E=k(C)$, and $k$ is algebraically closed.  
Thus the symbol 
$(f_1,f_2)$ is infinitely divisible in ${\rm K}_2^M(E)$ and hence
in ${\rm K}_2^M(K)$.

Conversely, assume that the symbol $(f_1,f_2)$ is infinitely divisible 
in  ${\rm K}_2^M(K)$ and that the field $k(f_1,f_2)$ has transcendence degree two.
Choosing an appropriate model of $X$, we may assume that
the functions $f_i$ define surjective morphisms $\pi_i\,:\, X \ra \P^1_i=\P^1$, 
and hence a proper surjective map $\pi\,:\, X\ra \P^1_1\times \P^1_2$.  

For any irreducible divisor $D\subset X$ the restriction 
of the symbol $(f_1,f_2)$ to $D$ is well-defined, as an element 
of ${\rm K}_1^M(k(D))$. It has to be infinitely divisible in  
${\rm K}_1^M(k(D))$, for each $D$.

For $j=1,2$, consider the divisors 
${\rm div}(f_j)=\sum n_{ij} D_{ij}$, where $D_{ij}$ 
are irreducible.
Let  $D_{11}$ be a component surjecting onto $\P^1_1\times 0$. 
The restriction of $f_2$ to $D_{11}$ is nonconstant. 
Thus $D_{11}$ is not a component in the divisor of $f_2$ and 
the residue 
$$
\varrho(f_1,f_2)\in {\rm K}_1^M(k(D_{11})^*)=(f_2|D_{11})^{n_{11}}\notin k^*.
$$
It remains to apply Lemma~\ref{lemm:k1} to conclude that the 
residue and hence the symbol are not divisible. This 
contradicts the assumption that $k(f_1,f_2)$ has transcendence 
degree two. 
\end{proof}

\begin{coro}
\label{coro:two}
Let $K$ and $L$ be function fields over $k$. 
Any group homomorphism 
$$
\psi_1\,:\,  \bar{{\rm K}}_1^M(K)\ra \bar{{\rm K}}_1^M(L)
$$
satisfying the assumptions of Theorem~\ref{thm:main}
maps multiplicative subgroups of 
normally closed one-dimensional subfields
of $K$ to multiplicative subgroups of 
one-dimensional subfields of $L$. 
\end{coro}

We now prove Theorem~\ref{thm:main}.

\

{\em Step 1.}
For each normally closed one-dimensional subfield 
$E\subset K$ there exists a one-dimensional
subfield $\tilde{E}\subset L$ such that 
$$
\psi_1(E^*/k^*)\subset \tilde{E}^*/l^*
$$
Indeed, Lemma~\ref{lemm:divis} identifies
multiplicative groups of 1-dimensional normally closed subfields in $K$:
For $x\in K^*\setminus k^*$ the group $\overline{k(x)}^*\subset K^*$ is 
the set of all $y\in K^*/k^*$ such that the symbol 
$(x,y)\in \bar{\rK}_2^M(K)$ is zero. 

\

{\em Step 2.}
There exists an $r\in \N$ such that 
$\psi_1^{1/r}(K^*/k^*)$ contains a primitive element of $L^*/l^*$. 
Note that $L^*/l^*$ is torsion-free. 
For $f,g\in K^*/k^*$ assume that 
$\psi_1(f), \psi_1(g)$ are $n_f$, resp. $n_g$, powers of primitive, 
multiplicatively independent  
elements in $L^*/l^*$. 
Let $M:=\langle \psi_1(f), \psi_1(g)\rangle$ and let ${\rm Prim}(M)$ 
be its primitivization. Then ${\rm Prim}(M)/M=\Z/n\oplus \Z/m$, 
with $n\mid m$, i.e., $n=\gcd(n_f,n_g)$.
Thus, we can take $r$ to be is the 
smallest nontrivial power of an element  in
$\psi_1(K^*/k^*)\subset L^*/l^*$.

\

{\em Step 3.}
By Theorem~\ref{thm:zero} either $\psi_1^{1/r}$ or $\psi_1^{-1/r}$ extends to a 
homomorphism of fields.

\bibliographystyle{smfplain}
\bibliography{milnor}
\end{document}